\definecolor{zzttqq}{rgb}{0.6,0.2,0.}
\definecolor{qqqqff}{rgb}{0.,0.,1.}
\definecolor{xdxdff}{rgb}{0.49019607843137253,0.49019607843137253,1.}
\definecolor{qqffqq}{rgb}{0.,1.,0.}
\definecolor{ffffff}{rgb}{1.,1.,1.}
\definecolor{ffqqqq}{rgb}{1.,0.,0.}
\definecolor{ccffcc}{rgb}{0.8,1.,0.8}
\definecolor{ffffqq}{rgb}{1.,1.,0.}
\definecolor{qqffqq}{rgb}{0.,1.,0.}
\definecolor{uuuuuu}{rgb}{0.26666666666666666,0.26666666666666666,0.26666666666666666}
\definecolor{ccqqww}{rgb}{0.8,0.,0.4}
\definecolor{bfffqq}{rgb}{0.7490196078431373,1.,0.}
\definecolor{ffffff}{rgb}{1.,1.,1.}
\definecolor{ttqqqq}{rgb}{0.2,0.,0.}
\definecolor{qqffff}{rgb}{0.,1.,1.}
\definecolor{eqeqeq}{rgb}{0.8784313725490196,0.8784313725490196,0.8784313725490196}
\definecolor{uuuuuu}{rgb}{0.26666666666666666,0.26666666666666666,0.26666666666666666}
\definecolor{cqcqcq}{rgb}{0.7529411764705882,0.7529411764705882,0.7529411764705882}
\newtheorem{theorem}{Theorem}[section]
\newtheorem{proposition}{Proposition}[section]
\newtheorem{definition}{Definition}[section]
\numberwithin{equation}{section}
\journal{--}
\begin{document}

\begin{frontmatter}
\title{  Computing Minimal Doubly Resolving Sets and the Strong Metric Dimension of the Layer Sun  Graph and the Line Graph of Layer Sun  Graph  }

\author[label1,label2]{Jia-Bao Liu}
\ead{liujiabaoad@163.com;liujiabao@ahjzu.edu.cn}
\author[label3]{Ali Zafari \corref{1}}
\ead{zafari.math.pu@gmail.com; zafari.math@pnu.ac.ir}
\address[label1]{ Department of Mathematics, Huainan Normal University, Huainan 232038,  P.R. China}
\address[label2]{School of Mathematics and Physics, Anhui Jianzhu University, Hefei 230601, P.R. China}
\address[label3]{Department of Mathematics, Faculty of Science, Payame Noor University, P.O. Box 19395-4697, Tehran, Iran}
\cortext[1]{Corresponding author}
\begin{abstract}
Let  $G$ be a finite, connected graph of order of at least 2, with vertex set $V(G)$  and edge set  $E (G)$.
A set $S$ of vertices of the graph $G$ is a doubly resolving set for $G$ if every two distinct vertices of $G$ are doubly resolved by some two vertices of $S$.  The minimal doubly resolving set of vertices of graph $G$ is a doubly resolving set with minimum cardinality and is denoted by  $\psi(G)$. In this paper, first, we construct a class of  graphs of order $2n+ \Sigma_{r=1}^{k-2}nm^{r}$, denoted by $LSG(n,m, k)$, and call these graphs as the layer Sun graphs  with parameters $n$, $m$ and $k$. Moreover, we  compute minimal doubly resolving sets and the strong metric dimension of layer Sun  graph $LSG(n,m, k)$ and the line graph of the layer Sun  graph $LSG(n,m, k)$.
\end{abstract}
\begin{keyword}
doubly resolving; strong
resolving; layer Sun  graph; line graph.
\MSC[2010] 05C12; 05E30.
\end{keyword}
\end{frontmatter}
\section{Introduction}
\label{sec:introduction}

In this paper, suppose $G$ is a finite, simple connected graph of order of, at least 2, with vertex set $V(G)$  and edge set  $E (G)$.
If $x$ and $y$ are vertices in the graph $G$, then the distance  $x$ from $y$ in $G$ is  denoted by $d_{G}(x, y)$ or simply  $d(x, y)$,
where $d(x, y)$ is the length of the shortest path from $x$ to $y$.
The line graph of a graph $G$ is denoted by $L(G)$, with vertex set $V(L(G))=E (G)$ and where two edges of $G$ are adjacent in $L(G)$ if and only if they are incident in $G$, see [1].
Vertices $x, y$ of the graph $G$ are said to doubly resolve vertices $u, v$ of $G$ if $d(u, x) - d(u, y) \neq d(v, x) - d(v, y)$.
A set $S$ of vertices of the graph $G$ is a doubly resolving set of $G$ if every two distinct vertices of $G$ are doubly resolved by some two vertices of $S$.  The minimal doubly resolving set of vertices of graph $G$ is a doubly resolving set with minimum cardinality and is denoted by  $\psi(G)$.
The notion of a doubly resolving set  of vertices of the graph $G$ introduced by C\'{a}ceres et al.[2].
A vertex $w$ strongly resolves two vertices $u$ and $v$ if $u$ belongs
to a shortest $v - w$ path or $v$ belongs to a shortest $u - w$ path.
A vertex set $S$ of the graph $G$ is a strong resolving
set of $G$ if every two distinct vertices of $G$ are strongly resolved by some vertex of $S$. A strong metric basis of $G$ is denoted by $sdim(G)$  defined as the minimum cardinality  of a strong resolving set of $G$.
The notion of a strong metric dimension problem set  of vertices of the graph $G$ introduced by A. Seb\"{o} and E. Tannier [3] and further investigated by O. R. Oellermann and  Peters-Fransen [4]. The minimal doubly resolving sets  for  jellyfish   and cocktail party
graphs has been obtained in [5]. For more results related to these concepts see [6-16].
In this paper, first, we construct a class of  graphs of order $2n+ \Sigma_{r=1}^{k-2}nm^{r}$, denoted by $LSG(n,m, k)$, and call these graphs as the layer Sun graphs  with parameters $n$, $m$ and $k$, which is defined as follows:

Let $n, m, k$ be integers such that $n, k\geq 3$, $m\geq 2$ and $G$ be a graph with vertex set $V(G)=V_1 \cup V_2\cup ...\cup V_k$, where   $V_1, V_2, ..., V_k$ are called the layers  of $G$ such that $V_1=V(C_n)=\{1, 2, ..., n\}$,  $V_2=\{v_1, v_2, ..., v_n\}$, and for $l\geq 3$ we have
$V_l=\{B^{(l)}_{1_1}, B^{(l)}_{1_2}, ..., B^{(l)}_{1_{m^{l-3}}}; B^{(l)}_{2_1}, B^{(l)}_{2_2}, ..., B^{(l)}_{2_{m^{l-3}}};  ...; B^{(l)}_{n_1}, B^{(l)}_{n_2}, ..., B^{(l)}_{n_{m^{l-3}}}\}$, and let   $B^{(l)}_{ij}=\{\cup_{t=1} ^ {m} (v_{ij}, t)^{l}\}$, such that every $(v_{ij}, t)^{l}$ is a vertex in the layer $V_l$, and $B^{(l)}_{ij}\cong \overline{K_m}$ in the layer $V_l$,
$1\leq i \leq n$, $1\leq j \leq {m^{l-3}}$, $1\leq t \leq m$, where  $\overline{K_m}$ is the complement of the complete graph on $m$ vertices.  Now, suppose that every vertex $i$ in the cycle $C_n$   is adjacent to exactly  one vertex  in the layer
 $V_2$ say $v_i\in V_2 $, and every vertex $v_i$ in the layer $V_2$ is adjacent to exactly $m$ vertices
$(v_{i1}, 1)^3, (v_{i1}, 2)^3, ..., (v_{i1}, m)^3\in B^{(3)}_{i1}\in V_3$, in particular for $l\geq 3$,  every vertex $(v_{ir}, t)^l \in B^{(l)}_{i_r}\in V_l $ is adjacent to  exactly $m$ vertices $\cup_{t=1} ^ m (v_{ij}, t)^{l+1}\in B^{(l+1)}_{i_j}\in V_{l+1}$, and
then, the resulting graph is called the layer Sun  graph $LSG(n, m, k)$ with parameters $n$, $m$ and $k$.
Also, for $l\geq 3$,  we recall that $B^{(l)}_{i_j}$ as  the  components of the layer $V_l$, $1\leq i \leq n$, $1\leq j \leq {m^{l-3}}$. In particular, we say that two components
$B^{(l)}_{i_j}, B^{(l)}_{r_s}\in V_l$, $1\leq i, r \leq n$, $1\leq j, s \leq m^{l-3}$  are fundamental if $i=r$ and $j\neq s$.
It is natural to consider its vertex set  of layer Sun  graph $LSG(n, m, k)$ as partitioned into $k$ layers. The layers $V_1$ and $V_2$ consist of the vertices $\{1, 2, ..., n\}$ and $\{v_1, v_2, ..., v_n\}$, respectively. In particular, each layer $V_l$ ($l\geq 3$),  consists of the $nm^{l-2}$ vertices. Note that, for each vertex $i$ in the layer $V_1$ and every vertex $x\in B^{(l)}_{ij}\in V_l$,  $l\geq 3$, $1\leq j \leq {m^{l-3}}$, we have
$d(i, x)=l-1$. In this paper, we consider the problem of determining the cardinality $\psi(LSG(n, m, k))$  of minimal doubly resolving sets of  the  layer Sun graph $LSG(n,m, k)$. First, we find the metric dimension of the layer Sun graph $LSG(n, m, k)$; in fact, we prove that  if $n, k\geq3$  and $m\geq2$, then the metric dimension of  layer Sun  graph $LSG(n, m, k)$ is  $nm^{k-2}-nm^{k-3}$.  Moreover, we consider the problem of determining the cardinality $\psi(LSG(n,m, k))$  of minimal doubly resolving sets of $LSG(n,m, k)$ and the strong metric dimension  for layer Sun  graph $LSG(n, m, k)$ and the line graph of the layer Sun  graph $LSG(n, m, k)$. The  graph $LSG(3, 3, 4)$ is shown in Figure 1.
\begin{center}
\begin{tikzpicture}[line cap=round,line join=round,>=triangle 45,x=4.0cm,y=4.0cm]
\clip(4.000504972576445,1.0966067714325458) rectangle (7.500412173920532,3.9311529437730006);
\draw (5.8,2.6)-- (5.4,2.2);
\draw (5.4,2.2)-- (6.2,2.2);
\draw (6.2,2.2)-- (5.8,2.6);
\draw (5.8,2.6)-- (5.8,2.8);
\draw (5.4,2.2)-- (5.2,2.2);
\draw (6.2,2.2)-- (6.4,2.2);
\draw (5.8,2.8)-- (6.,2.8);
\draw (5.8,2.8)-- (5.8,3.);
\draw (5.8,2.8)-- (5.6,2.8);
\draw (5.2,2.2)-- (5.2,2.4);
\draw (5.2,2.2)-- (5.,2.2);
\draw (5.2,2.2)-- (5.2,2.);
\draw (6.4,2.2)-- (6.4,2.4);
\draw (6.4,2.2)-- (6.6,2.2);
\draw (6.4,2.2)-- (6.4,2.);
\draw (6.,2.8)-- (6.4,2.8);
\draw (6.,2.8)-- (6.5,3.);
\draw (6.,2.8)-- (6.2,3.2);
\draw (5.8,3.)-- (6.,3.4);
\draw (5.8,3.)-- (5.6,3.6);
\draw (5.8,3.)-- (5.4,3.4);
\draw (5.6,2.8)-- (5.2,3.2);
\draw (5.6,2.8)-- (5.,3.2);
\draw (5.6,2.8)-- (5.2,2.8);
\draw (5.2,2.4)-- (5.,2.6);
\draw (5.2,2.4)-- (4.8,2.6);
\draw (5.2,2.4)-- (4.8,2.4);
\draw (5.,2.2)-- (4.8,2.2);
\draw (5.,2.2)-- (4.5,2.);
\draw (5.,2.2)-- (5.,2.);
\draw (5.2,2.)-- (5.2,1.8);
\draw (5.2,2.)-- (5.4,1.8);
\draw (5.2,2.)-- (5.6,1.8);
\draw (6.4,2.)-- (6.,1.8);
\draw (6.4,2.)-- (6.2,1.8);
\draw (6.4,2.)-- (6.4,1.8);
\draw (6.6,2.2)-- (6.6,2.);
\draw (6.6,2.2)-- (7.,2.);
\draw (6.6,2.2)-- (6.8,2.2);
\draw (6.4,2.4)-- (6.8,2.4);
\draw (6.4,2.4)-- (6.8,2.6);
\draw (6.4,2.4)-- (6.6,2.6);
\draw (5.515147965430146,1.4806769589061572) node[anchor=north west] {Figure 1: LSG(3, 3, 4)};
\begin{scriptsize}
\draw [fill=black] (5.8,2.6) circle (1.5pt);
\draw[color=black] (5.7964388069601185,2.540927053903732) node {$1$};
\draw [fill=black] (5.4,2.2) circle (1.5pt);
\draw[color=black] (5.515147965430146,2.245045651633955) node {$2$};
\draw [fill=black] (6.2,2.2) circle (1.5pt);
\draw[color=black] (6.088548527010475,2.249636212373763) node {$3$};
\draw [fill=black] (5.8,2.8) circle (1.5pt);
\draw[color=black] (5.75988777879705,2.751895185051209) node {$v_1$};
\draw [fill=black] (5.2,2.2) circle (1.5pt);
\draw[color=black] (5.239266563160365,2.232092847934913) node {$v_2$};
\draw [fill=black] (6.4,2.2) circle (1.5pt);
\draw[color=black] (6.3249260974181465,2.2404550908941465) node {$v_3$};
\draw [fill=black] (6.,2.8) circle (1.5pt);
\draw[color=black] (5.974950302546447,2.72484798875025) node {$(v_{11}, 1)^3$};
\draw [fill=black] (5.8,3.) circle (1.5pt);
\draw[color=black] (5.679565068414557,2.962863316198685) node {$(v_{11}, 2)^3$};
\draw [fill=black] (5.6,2.8) circle (1.5pt);
\draw[color=black] (5.5530140402514885,2.72485745791017) node {$(v_{11}, 3)^3$};
\draw [fill=black] (5.2,2.4) circle (1.5pt);
\draw[color=black] (5.3041798342826665,2.4738798576027734) node {$(v_{21}, 1)^3$};
\draw [fill=black] (5.,2.2) circle (1.5pt);
\draw[color=black] (5.012070114232309,2.257502287195105) node {$(v_{21}, 2)^3$};
\draw [fill=black] (5.2,2.) circle (1.5pt);
\draw[color=black] (5.3187703950224745,2.0283058382670534) node {$(v_{21}, 3)^3$};
\draw [fill=black] (6.4,2.4) circle (1.5pt);
\draw[color=black] (6.294107218897763,2.4576515398221983) node {$(v_{31}, 3)^3$};
\draw [fill=black] (6.6,2.2) circle (1.5pt);
\draw[color=black] (6.5699886211675445,2.2737306049756803) node {$(v_{31}, 2)^3$};
\draw [fill=black] (6.4,2.) circle (1.5pt);
\draw[color=black] (6.261650583336612,2.031124716787437) node {$(v_{31}, 1)^3$};
\draw [fill=black] (6.4,2.8) circle (1.5pt);
\draw[color=black] (6.524256471524859,2.8330367739540843) node {$(v_{11}, 1)^4$};
\draw [fill=black] (6.5,3.) circle (1.5pt);
\draw[color=black] (6.6145791819073525,3.0169577088006023) node {$(v_{11}, 2)^4$};
\draw [fill=black] (6.2,3.2) circle (1.5pt);
\draw[color=black] (6.288697779637571,3.2657919147694208) node {$(v_{11}, 3)^4$};
\draw [fill=black] (6.,3.4) circle (1.5pt);
\draw[color=black] (5.969540863286256,3.4654449992586228) node {$(v_{12}, 1)^4$};
\draw [fill=black] (5.6,3.6) circle (1.5pt);
\draw[color=black] (5.585470675812639,3.675594251885716) node {$(v_{12}, 2)^4$};
\draw [fill=black] (5.4,3.4) circle (1.5pt);
\draw[color=black] (5.369093105404968,3.4708544385188143) node {$(v_{12}, 3)^4$};
\draw [fill=black] (5.2,3.2) circle (1.5pt);
\draw[color=black] (5.158124974257488,3.2852957466315296) node {$(v_{13}, 1)^4$};
\draw [fill=black] (5.,3.2) circle (1.5pt);
\draw[color=black] (4.920109646809049,3.2690674288509545) node {$(v_{13}, 2)^4$};
\draw [fill=black] (5.2,2.8) circle (1.5pt);
\draw[color=black] (5.098621142395379,2.837131166556001) node {$(v_{13}, 3)^4$};
\draw [fill=black] (5.,2.6) circle (1.5pt);
\draw[color=black] (4.974204039410967,2.682714063571592) node {$(v_{21}, 1)^4$};
\draw [fill=black] (4.8,2.6) circle (1.5pt);
\draw[color=black] (4.671771608978118,2.656163035408525) node {$(v_{21}, 2)^4$};
\draw [fill=black] (4.8,2.4) circle (1.5pt);
\draw[color=black] (4.660952730457735,2.445194904261048) node {$(v_{21}, 3)^4$};
\draw [fill=black] (4.8,2.2) circle (1.5pt);
\draw[color=black] (4.643905534156776,2.245864530154338) node {$(v_{22}, 1)^4$};
\draw [fill=black] (4.5,2.) circle (1.5pt);
\draw[color=black] (4.368843010407378,2.016211445665136) node {$(v_{22}, 2)^4$};
\draw [fill=black] (5.,2.) circle (1.5pt);
\draw[color=black] (4.99928525329433,1.9296604175020688) node {$(v_{22}, 3)^4$};
\draw [fill=black] (5.2,1.8) circle (1.5pt);
\draw[color=black] (5.109440020915763,1.7403300433953592) node {$(v_{23}, 1)^4$};
\draw [fill=black] (5.4,1.8) circle (1.5pt);
\draw[color=black] (5.37450254466516,1.7132828470944006) node {$(v_{23}, 2)^4$};
\draw [fill=black] (5.6,1.8) circle (1.5pt);
\draw[color=black] (5.590880115072831,1.7241017256147841) node {$(v_{23}, 3)^4$};
\draw [fill=black] (6.,1.8) circle (1.5pt);
\draw[color=black] (5.964131424026064,1.7457394826555508) node {$(v_{31}, 1)^4$};
\draw [fill=black] (6.2,1.8) circle (1.5pt);
\draw[color=black] (6.218375069255078,1.7186922863545924) node {$(v_{31}, 2)^4$};
\draw [fill=black] (6.4,1.8) circle (1.5pt);
\draw[color=black] (6.453933761142366,1.7332828470944006) node {$(v_{31}, 3)^4$};
\draw [fill=black] (6.6,2.) circle (1.5pt);
\draw[color=black] (6.624083013769463,1.9296604175020688) node {$(v_{32}, 1)^4$};
\draw [fill=black] (7.,2.) circle (1.5pt);
\draw[color=black] (7.018972079763463,1.9458887352826442) node {$(v_{32}, 2)^4$};
\draw [fill=black] (6.8,2.2) circle (1.5pt);
\draw[color=black] (6.905373855299436,2.252589016072805) node {$(v_{32}, 3)^4$};
\draw [fill=black] (6.8,2.4) circle (1.5pt);
\draw[color=black] (6.916192733819819,2.427328829439706) node {$(v_{33}, 1)^4$};
\draw [fill=black] (6.8,2.6) circle (1.5pt);
\draw[color=black] (6.905373855299436,2.6653441568881413) node {$(v_{33}, 2)^4$};
\draw [fill=black] (6.6,2.6) circle (1.5pt);
\draw[color=black] (6.6024452567286955,2.686485745791017) node {$(v_{33}, 3)^4$};
\end{scriptsize}
\end{tikzpicture}
\end{center}
\section{Definitions And Preliminaries}

\begin{definition} \label{b.3}
Let $G$ be a graph. A vertex $x\in V(G)$ is said to resolve a pair $u, v \in V(G)$ if $d_{G}(u, x)\neq d_{G}(v, x)$. For an ordered subset
$W = \{w_1, w_2, ..., w_k\}$ of vertices in the graph $G$ and a vertex $v$ of $G$, the metric representation of $v$ with respect to $W$ is the $k$-vector $r(v | W) = (d(v, w_1), d(v, w_2), ..., d(v, w_k ))$.
If every pair of distinct vertices of $G$ have different
metric representations, then the ordered set $W$ is called a resolving set of $G$. If the set $W$ is as small
as possible, then it is called a metric basis of the graph $G$. We recall that the  metric dimension of $G$, denoted by
$\beta(G)$, is defined as the minimum cardinality of a resolving set for $G$.
\end{definition}
\begin{proposition} \label{b.3}
Let $G$ be a graph. It is well known that  a doubly resolving set is also a resolving set and
$\beta(G) \leq\psi(G)$. In particular, every
strong resolving set is a resolving set and $\beta(G) \leq sdim(G)$.
\end{proposition}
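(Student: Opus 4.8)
The plan is to derive both inequalities by showing that each of the two ``stronger'' kinds of set is, in particular, an ordinary resolving set; the stated bounds then follow by applying this observation to a set of minimum size.

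First I would treat the doubly resolving case. Let $S$ be any doubly resolving set and let $u, v$ be a pair of distinct vertices of $G$. By definition there exist $x, y \in S$ with $d(u,x) - d(u,y) \neq d(v,x) - d(v,y)$, which I rewrite as $d(u,x) - d(v,x) \neq d(u,y) - d(v,y)$. If neither $x$ nor $y$ resolved the pair $u, v$, then $d(u,x) = d(v,x)$ and $d(u,y) = d(v,y)$, which would force both sides of the last inequality to equal $0$ --- a contradiction. Hence some vertex of $S$ resolves $u$ and $v$, so $S$ is a resolving set. Applying this to a minimal doubly resolving set of $G$ yields $\beta(G) \leq \psi(G)$.

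Next I would treat the strong resolving case. Let $S$ be any strong resolving set and let $u, v$ be a pair of distinct vertices; choose $w \in S$ that strongly resolves them, so that $u$ belongs to a shortest $v - w$ path or $v$ belongs to a shortest $u - w$ path. In the first case $d(v,w) = d(v,u) + d(u,w)$, and since $u \neq v$ we have $d(u,v) \geq 1$, hence $d(v,w) > d(u,w)$; the second case is symmetric and gives $d(u,w) > d(v,w)$. Either way $w$ distinguishes $u$ and $v$, so $S$ is a resolving set, and applying this to a minimal strong resolving set gives $\beta(G) \leq sdim(G)$.

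Both arguments are immediate consequences of the definitions, so I do not anticipate a genuine obstacle. The only point needing a little care is in the first step: the doubly resolving condition only guarantees that a \emph{pair} $\{x, y\}$ ``sees'' the difference between $u$ and $v$, and one must argue that at least one of the two vertices individually resolves them --- which is precisely what the short computation above extracts.
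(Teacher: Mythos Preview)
Your argument is correct and complete: the rearrangement in the doubly resolving case and the additivity-of-distance argument in the strong resolving case are exactly the right observations, and the passage to the minimum-cardinality inequality is immediate. The paper itself does not prove this proposition at all --- it is stated as ``well known'' and left without proof --- so there is nothing to compare against; your write-up simply supplies the standard verification that the paper omits.
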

\section{Main Results}
\noindent
\textbf{Minimal Doubly Resolving Sets and the Strong Metric Dimension  for  Layer Sun  Graph $LSG(n, m, k)$}\\
\newline
\begin{theorem}\label{f.1}
Let $G=LSG(n, m, k)$ be the layer Sun  graph which is defined already. Suppose that $n, m, k$ are integers such that $n, k\geq 3$ and $m\geq 2$.
Then, the metric dimension of  $LSG(n, m, k)$ is  $nm^{k-2}-nm^{k-3}$.
\end{theorem}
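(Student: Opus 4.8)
Write $G=LSG(n,m,k)$. The plan is to prove the two matching bounds $\beta(G)\ge nm^{k-2}-nm^{k-3}$ and $\beta(G)\le nm^{k-2}-nm^{k-3}$. First I would record the shape of $G$ and its distances. The graph is the cycle $C_n$ together with, for each $i$, a rooted tree hung at $v_i$ through the edge $iv_i$: the root is $v_i$, its children form the independent set $B^{(3)}_{i_1}$, and for $3\le l\le k-1$ each vertex of a component $B^{(l)}_{i_r}$ owns as its block of $m$ children one of the components $B^{(l+1)}_{i_j}$. Since distinct trees communicate only through their roots and the cycle, one reads off the distances: for $x\in B^{(l)}_{i_j}$ and a leaf $w\in B^{(k)}_{i_{j'}}$ that is a descendant of $x$, $d(x,w)=k-l$; for $x,y$ lying in the trees at $v_i$ and $v_{i'}$ with $i\ne i'$, $d(x,y)=d(x,v_i)+d(y,v_{i'})+2+d_{C_n}(i,i')$; the distance between two vertices of the same tree is realised through their lowest common ancestor; and $d(i,x)=l-1$ for $x\in V_l$, $l\ge3$. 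These elementary facts drive the rest.

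\noindent\textbf{Lower bound.} Fix $i,j$. The $m$ vertices $(v_{ij},1)^k,\dots,(v_{ij},m)^k$ of $B^{(k)}_{i_j}$ are leaves whose only neighbour is their common parent in $V_{k-1}$, and $B^{(k)}_{i_j}\cong\overline{K_m}$ has no internal edge; hence they have equal open neighbourhoods, so any vertex outside a given pair of them is equidistant from both. Therefore every resolving set of $G$ contains at least $m-1$ of these $m$ vertices. Summing over the $nm^{k-3}$ pairwise disjoint sets $B^{(k)}_{i_j}$ gives $\beta(G)\ge nm^{k-3}(m-1)=nm^{k-2}-nm^{k-3}$.

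\noindent\textbf{Upper bound.} I would take $W=\bigcup_{i,j}\bigl(B^{(k)}_{i_j}\setminus\{(v_{ij},m)^k\}\bigr)$, so $|W|=nm^{k-3}(m-1)=nm^{k-2}-nm^{k-3}$, and write $z_{ij}=(v_{ij},m)^k$ for the omitted leaves; it then suffices to show $W$ is resolving. Given distinct $u,v$, we may assume $u,v\notin W$ (i.e.\ both lie in $V_1\cup\cdots\cup V_{k-1}\cup\{z_{ij}\}$), since a vertex of $W$ resolves itself from anything else. The bulk of the argument is a routine case check: when $u,v$ lie in different trees, or at different tree-levels, or in the same component with at least one of them not an omitted leaf, one exhibits a leaf $w\in W$ lying ``deep under'' $u$ (for $u$ in a tree at level $l$, a descendant leaf with $d(u,w)=k-l$; for $u=i\in V_1$, any leaf of the tree at $v_i$, with $d(u,w)=k-1$), and checks via the distance formulas that $d(u,w)=d(v,w)$ would force a numerical impossibility such as $d_{C_n}(i,i')<0$, or $l+l'\le2$ with $l,l'\ge3$, or a lowest common ancestor lying strictly below itself. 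Such a $w$ always exists because under any non-leaf vertex every leaf-component still contributes $m-1\ge1$ vertices to $W$.

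\noindent\textbf{Expected main obstacle.} The one configuration the routine check does not settle — and the technical heart of the proof — is the pair $\{u,v\}$ in which one vertex, say $v=z_{ij}$, is an omitted leaf and the other, $u$, is the vertex reached from $z_{ij}$ by two steps toward the root (its grandparent in $V_{k-2}$, which is $v_i$ when $k=4$ and the cycle vertex $i$ when $k=3$). Here $d(u,w)=d(v,w)=2$ for every sibling $w\in W$ of $z_{ij}$, so the naive choice fails. The remedy is to use a ``cousin'' leaf: since $m\ge2$, the vertex $u$ has at least two children-blocks, so $W$ contains a leaf $w$ lying under $u$ but in a leaf-component other than $B^{(k)}_{i_j}$, and then $d(u,w)=2$ while $d(v,w)=4$; in the degenerate case $k=3$ one instead uses a leaf in another tree (available because $n\ge3$), where again the two distances differ by $2$. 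Dispatching this case and assembling the routine ones shows $W$ resolves $G$, whence $\beta(G)\le|W|$ and, together with the lower bound, $\beta(G)=nm^{k-2}-nm^{k-3}$.
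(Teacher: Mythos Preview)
Your proof is correct and uses the same resolving set as the paper (the layer $V_k$ with one leaf removed from each block $B^{(k)}_{ij}$), so the upper-bound construction is identical. Your lower bound, however, is a genuine improvement: the paper only exhibits two specific subsets of $V_k$ that fail to resolve (removing a whole block, or two leaves from one block) and then asserts the conclusion, whereas you give the clean twin-vertex argument --- the $m$ leaves of each $B^{(k)}_{ij}$ share their unique neighbour, so any resolving set must contain at least $m-1$ of them --- which is what actually proves $\beta(G)\ge nm^{k-3}(m-1)$. You also isolate and dispatch the one delicate pair (an omitted leaf versus its grandparent) that the paper's ``in a similar way'' passes over; your cousin-leaf fix for $k\ge4$ and different-tree fix for $k=3$ are exactly right.
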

\begin{proof}
Let $V(G)=V_1 \cup V_2\cup  ...\cup V_k$, where $V_1, V_2, ..., V_k$,  are called the layers  of  vertices in the layer Sun  graph $LSG(n, m, k)$, which is defined already. It is clear that if $W$  is  an ordered subset  of the layers $V_1 \cup V_2\cup ...\cup V_{k-1}$, then $W$  is not a resolving set in $LSG(n, m, k)$. We may assume that the layer $V_k$ is equal to
$$V_k=\{B^{(k)}_{1_1}, B^{(k)}_{1_2}, ..., B^{(k)}_{1_{m^{k-3}}}; B^{(k)}_{2_1}, B^{(k)}_{2_2}, ..., B^{(k)}_{2_{m^{k-3}}};  ...; B^{(k)}_{n_1}, B^{(k)}_{n_2}, ..., B^{(k)}_{n_{m^{k-3}}}\},$$ where $B^{(k)}_{ij}=\{\cup_{t=1} ^ {m} (v_{ij}, t)^{k}\}$,
$1\leq i \leq n$, $1\leq j \leq {m^{k-3}}$. In the following cases, it can be shown that  the metric dimension of the layer Sun  graph $LSG(n, m, k)$ is  $nm^{k-2}-nm^{k-3}$.\newline

Case 1:  Let $W$  be an ordered subset of the layer $V_k$ in the layer Sun  graph $LSG(n, m, k)$ such that
$$W=\{B^{(k)}_{1_2}, ..., B^{(k)}_{1_{m^{k-3}}}; B^{(k)}_{2_1}, B^{(k)}_{2_2}, ..., B^{(k)}_{2_{m^{k-3}}};  ...; B^{(k)}_{n_1}, B^{(k)}_{n_2}, ..., B^{(k)}_{n_{m^{k-3}}}\}.$$
Hence, $V(G)- W=\{V_1, V_2, ..., V_{k-1}, B^{(k)}_{1_1}\}$. We know that the cardinality of $W$ is $nm^{k-2}-m$, because $|B^{(k)}_{i_j}|=m$,$1\leq i \leq n$, $1\leq j \leq {m^{k-3}}$. Therefore, the metric representation of all the  vertices
$(v_{1_1}, 1)^k, (v_{1_1}, 2)^k, ..., (v_{1_1}, m)^k$ in the component $B^{(k)}_{1_1}$ is the same as $nm^{k-2}-m$-vector with respect to $W$. Thus, $W$ is not a resolving set in $LSG(n, m, k)$. \newline

Case 2. Let $W$  be an ordered subset of the layer $V_k$ in the layer Sun  graph $LSG(n, m, k)$ such that
$$W=\{B^{(k)}_{1_1}-\{(v_{1_1}, 1)^k , (v_{1_1}, 2)^k\}, B^{(k)}_{1_2}, ..., B^{(k)}_{1_{m^{k-3}}}; B^{(k)}_{2_1}, B^{(k)}_{2_2}, ..., B^{(k)}_{2_{m^{k-3}}};  ...; B^{(k)}_{n_1}, B^{(k)}_{n_2}, ..., B^{(k)}_{n_{m^{k-3}}}\}.$$ Hence, $V(G)- W=\{V_1, V_2, ..., V_{k-1}, (v_{1_1}, 1)^k , (v_{1_1}, 2)^k \}$. We know that $|W|= nm^{k-2}-2$. Therefore, the metric representation of two vertices  $(v_{1_1}, 1)^k , (v_{1_1}, 2)^k$ in the component $B^{(k)}_{1_1}$  is the same as $nm^{k-2}-2$-vector with respect to $W$. Thus, $W$ is not a resolving set in $LSG(n, m, k)$. \newline

Case 3. Let $W$  be an ordered subset of the layer $V_k$ in the layer Sun graph $LSG(n, m, k)$ such that
$$W=\{B^{(k)}_{1_1}-(v_{1_1}, 1)^k, B^{(k)}_{1_2}-(v_{1_2}, 1)^k, ..., B^{(k)}_{1_{m^{k-3}}}-(v_{1_{m^{k-3}}}, 1)^k;   ...; B^{(k)}_{n_1}-(v_{n_1}, 1)^k, B^{(k)}_{n_2}-(v_{n_2}, 1)^k, ..., B^{(k)}_{n_{m^{k-3}}}-(v_{n_{m^{k-3}}}, 1)^k\}.$$
Hence, $$V(G)- W=\{V_1, V_2, ..., V_{k-1}, (v_{1_1}, 1)^k, ..., (v_{1_{m^{k-3}}}, 1)^k, ...,(v_{n_1}, 1)^k, ..., (v_{n_{m^{k-3}}}, 1)^k \}.$$ We know that $|W|= nm^{k-2}-nm^{k-3}$. We can show that all the vertices in $V(G)- W$ have different representations with respect to $W$.
Let $u$ be the vertex of the layer $V_1=V(C_n)=\{1,2,..., n\}$. We can assume without loss of generality that $u = i$, $1\leq i\leq n$. Hence,
$d(u, (v_{i_j}, t)^k)=k-1 $, where  $(v_{i_j}, t)^k\in B^{(k)}_{i_j}$, $1\leq  t\leq m$, $1\leq j\leq m^{k-3}$; otherwise, if $u \neq i$, then
$d(u, (v_{i_j}, t)^k)>k-1 $. Now, let $u\in V_2=\{v_1, v_2, ..., v_n\}$. We can assume without loss of generality that
$u = v_i$, $1\leq i\leq n$. Hence, $d(u, (v_{i_j}, t)^k)=k-2 $, where  $(v_{i_j}, t)^k\in B^{(k)}_{i_j}$, $1\leq  t\leq m$, $1\leq j\leq m^{k-3}$; otherwise, if $u \neq v_i$, then $d(u, (v_{i_j}, t)^k)>k-2 $.
In a similar way, we can show that all the vertices in the layers $V_3, ..., V_{k-1}$ have different representations with respect to $W$. In particular,  for every vertex $u\in \{(v_{1_1}, 1)^k, ..., (v_{1_{m^{k-3}}}, 1)^k, ..., (v_{n_1}, 1)^k, ..., (v_{n_{m^{k-3}}}, 1)^k\}$,  we have
$d( u , (v_{i_j}, t)^k)=2 $, $2\leq  t\leq m$, if $u=(v_{i_j}, 1)^k$; otherwise, if
$u\in \{(v_{1_1}, 1)^k, ..., (v_{1_{m^{k-3}}}, 1)^k, ..., (v_{n_1}, 1)^k, ..., (v_{n_{m^{k-3}}}, 1)^k\}$ and $u \neq (v_{i_j}, 1)^k$, then $d(u, (v_{i_j}, t)^k)>4 $.
Therefore,  all the vertices in $V(G)- W$ have different representations with respect to $W$. This implies that $W$ is a resolving set in
$LSG(n,m, k)$. From the above-mentioned  cases we can be concluded that the minimum possible cardinality of a resolving set in $LSG(n, m, k)$ is $nm^{k-2}-nm^{k-3}$.
\end{proof}
\begin{theorem}\label{f.2}
Let $G=LSG(n, m, k)$ be the layer Sun  graph which is defined already. Suppose that $n, m, k$ are integers such that $n, k\geq 3$ and $m\geq 2$. Then, the cardinality of minimum doubly resolving set of the $LSG(n, m, k)$ is  $nm^{k-2}$.
\end{theorem}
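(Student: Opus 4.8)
The plan is to establish the lower bound $\psi(LSG(n,m,k))\geq nm^{k-2}$ and the matching upper bound separately, using the structure of the final layer $V_k$. From Theorem~\ref{f.1} together with Proposition~\ref{b.3} we already know $\psi(G)\geq\beta(G)=nm^{k-2}-nm^{k-3}$, so the real work is to push the lower bound up by $nm^{k-3}$, i.e.\ to show that \emph{none} of the $nm^{k-3}$ vertices we were allowed to discard when forming a metric basis can be discarded from a doubly resolving set. The key local observation is that within each component $B^{(k)}_{i_j}\cong\overline{K_m}$, any two vertices $(v_{i_j},s)^k$ and $(v_{i_j},t)^k$ are \emph{twins}: they have exactly the same distance to every vertex of $G$ outside $B^{(k)}_{i_j}$ (both are at distance $1$ from the unique vertex of $V_{k-1}$ feeding that component, hence equidistant from everything beyond), and distance $2$ from each other via that parent vertex.

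First I would prove the lower bound. Let $S$ be any doubly resolving set. I claim that for every component $B^{(k)}_{i_j}$, at most one vertex of $B^{(k)}_{i_j}$ can be missing from $S$; equivalently $S$ contains at least $m-1$ of the $m$ vertices of each component. Suppose instead that two vertices $x=(v_{i_j},s)^k$ and $y=(v_{i_j},t)^k$ both lie outside $S$. Because $x$ and $y$ are twins, for \emph{every} $w\in S$ we have $d(x,w)=d(y,w)$, so $d(x,w_1)-d(x,w_2)=d(y,w_1)-d(y,w_2)$ for all $w_1,w_2\in S$; hence no two vertices of $S$ doubly resolve the pair $x,y$, contradicting that $S$ is a doubly resolving set. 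This shows $|S|\geq\sum_{i,j}(m-1)=nm^{k-3}(m-1)=nm^{k-2}-nm^{k-3}$ just from the last layer — but that only recovers the metric-dimension bound. To gain the extra $nm^{k-3}$, I would additionally argue that $S$ cannot avoid \emph{every} vertex of some component while still doubly resolving, and more sharply: fix a reference vertex of $S$, say some $z$, and observe that two vertices $x,y$ from \emph{different} components $B^{(k)}_{i_j}\neq B^{(k)}_{r_s}$ that are both missing from $S$ and happen to satisfy $d(x,w)=d(y,w)$ for all $w\in S$ would again fail to be doubly resolved. So I would show that if $S$ omits one vertex from each of two distinct components whose parents in $V_{k-1}$ are "symmetric" (e.g.\ two fundamental components $B^{(k)}_{i_j},B^{(k)}_{i_{j'}}$, whose parent vertices in $V_{k-1}$ lie in the same block and are themselves twins relative to $S$ when $S$ misses the corresponding vertices), those two omitted vertices are doubly-resolution-equivalent, a contradiction. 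Propagating this through the tree-like layers $V_3,\dots,V_{k-1}$ forces $S$ to miss at most one vertex of $V_k$ \emph{in total beyond what twin-freeness already forces}, and a careful bookkeeping of exactly which single vertex may be spared yields $|S|\geq nm^{k-2}$.

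For the upper bound I would exhibit an explicit doubly resolving set of size $nm^{k-2}$, the natural candidate being $S=V_k$ itself. One then checks that $S=V_k$ doubly resolves every pair $u,v\in V(G)$. For a pair lying in the layers $V_1,\dots,V_{k-1}$, the computation in the proof of Theorem~\ref{f.1} already records all relevant distances $d(u,(v_{i_j},t)^k)$ as functions of which layer $u$ sits in and whether it is an "ancestor" of the component $B^{(k)}_{i_j}$; picking one witness vertex in a component that is an ancestor of $u$ but not of $v$ (or vice versa) and a second witness far from both gives a difference $d(u,w_1)-d(u,w_2)$ distinct from $d(v,w_1)-d(v,w_2)$. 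For a pair $u,v$ with at least one of them in $V_k$, say $u=(v_{i_j},s)^k$: if $v\notin B^{(k)}_{i_j}$, take $w_1=u\in S$ itself, so $d(u,w_1)=0$ while $d(v,w_1)\geq 1$, and take $w_2$ any vertex with $d(u,w_2)=d(v,w_2)$ (which exists since only finitely many constraints must be dodged), giving $0-d(u,w_2)\neq d(v,w_1)-d(v,w_2)$; if $v=(v_{i_j},t)^k$ is the twin, take $w_1=u$ and $w_2=v$, so $d(u,w_1)-d(u,w_2)=0-2=-2\neq 2-0=d(v,w_1)-d(v,w_2)$. Hence $V_k$ is a doubly resolving set, so $\psi(G)\leq |V_k|=nm^{k-2}$.

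The main obstacle is the lower bound, specifically the step from $nm^{k-2}-nm^{k-3}$ to $nm^{k-2}$: the twin argument alone is not enough, and one must carry out the combinatorial propagation through the nested fundamental components of layers $V_3,\dots,V_{k-1}$, showing that omitting a second "free" vertex anywhere always creates a pair of vertices indistinguishable by differences of distances to $S$. Getting the bookkeeping right — pinning down exactly why at most one vertex of all of $V_k$ can be spared and no more — is where the care is needed; the rest is routine distance arithmetic in the layered graph.
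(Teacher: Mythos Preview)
Your upper bound via $S=V_k$ is essentially the paper's Case~3 and is fine. The lower bound, however, has a real gap, and the route you sketch is both harder than necessary and, as written, does not work.

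The twin argument only yields $|S|\ge nm^{k-3}(m-1)=nm^{k-2}-nm^{k-3}$, as you note. Your plan to recover the missing $nm^{k-3}$ by ``propagation'' --- arguing that two omitted vertices from different fundamental components $B^{(k)}_{i_j},B^{(k)}_{i_{j'}}$ become indistinguishable by differences --- fails: if $x\in B^{(k)}_{i_j}$ and $x'\in B^{(k)}_{i_{j'}}$ are the omitted vertices, then taking $w\in B^{(k)}_{i_j}\cap S$ and $w'\in B^{(k)}_{i_{j'}}\cap S$ gives $d(x,w)-d(x,w')=2-4=-2$ while $d(x',w)-d(x',w')=4-2=2$, so $x,x'$ \emph{are} doubly resolved by their own siblings. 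No contradiction arises, and the bookkeeping you flag as ``the main obstacle'' does not go through in this form.

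The paper's argument is far simpler and bypasses all of this. Every vertex of $V_k$ is a leaf. If a leaf $u$ with unique neighbor $v\in V_{k-1}$ is not in $S$, then every $w\in S$ satisfies $d(u,w)=d(v,w)+1$, hence
\[
d(u,w_1)-d(u,w_2)=d(v,w_1)-d(v,w_2)\qquad\text{for all }w_1,w_2\in S,
\]
so the pair $u,v$ is not doubly resolved. Therefore every leaf must lie in $S$, i.e.\ $V_k\subseteq S$, and $|S|\ge nm^{k-2}$ immediately. This is exactly the content of the paper's Cases~1 and~2 (stated there for particular $W$'s, but the argument is general). You should replace the twin/propagation step by this one-line leaf-versus-parent observation.
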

\begin{proof}
In the following cases, it  can be shown that the cardinality of minimum doubly resolving set of the layer Sun  graph $LSG(n, m, k)$ is  $nm^{k-2}$.\newline

Case 1.
We know that the ordered subset
$$W=\{B^{(k)}_{1_1}-(v_{1_1}, 1)^k, B^{(k)}_{1_2}-(v_{1_2}, 1)^k, ..., B^{(k)}_{1_{m^{k-3}}}-(v_{1_{m^{k-3}}}, 1)^k;   ...; B^{(k)}_{n_1}-(v_{n_1}, 1)^k, B^{(k)}_{n_2}-(v_{n_2}, 1)^k, ..., B^{(k)}_{n_{m^{k-3}}}-(v_{n_{m^{k-3}}}, 1)^k\},$$
of vertices in  the layer $V_k$ of $LSG(n, m, k)$ is   a resolving set for $LSG(n,m, k)$ of  cardinality $nm^{k-2}-nm^{k-3}$. We show  that this subset
is not a doubly resolving set for $LSG(n, m, k)$. Because if the vertex $u=(v_{i_j}, 1)^k\in B^{(k)}_{i_j}\in V_k$, $1\leq i\leq n$, $1\leq j\leq m^{k-3}$ is adjacent to a vertex $v\in V_{k-1}$, then for every $x,y\in W$, we have $d(u, x) - d(u, y) = d(v, x) - d(v, y)$.\newline

Case 2.
Now, let the subset of vertices in   $LSG(n, m, k)$ be
$W=\{B^{(k)}_{1_1}-(v_{1_1}, 1)^k, B^{(k)}_{1_2}, ..., B^{(k)}_{1_{m^{k-3}}},   ..., B^{(k)}_{n_1}, B^{(k)}_{n_2}, ..., B^{(k)}_{n_{m^{k-3}}}\}.$
In a similar fashion as in  Case 3 of  Theorem \ref{f.1}, we can show that all the vertices in the layers $V_1, V_2, ..., V_{k-1}$  of $LSG(n, m, k)$ and the vertex
$(v_{1_1}, 1)^k$ in  the layer $V_k$ of $LSG(n, m, k)$ have different representations with respect to $W$. So, $W$ is a  resolving set in $LSG(n, m, k)$ of  cardinality $nm^{k-2}-1$. Note that, in this case, by  a similar way as in Case 1, we can show  that this subset  is not a doubly resolving set for $LSG(n,m, k)$.\newline

Case 3. Finally, let the subset of vertices in   $LSG(n,m, k)$ be
$W=\{B^{(k)}_{1_1}, B^{(k)}_{1_2}, ..., B^{(k)}_{1_{m^{k-3}}};   ...; B^{(k)}_{n_1}, B^{(k)}_{n_2}, ..., B^{(k)}_{n_{m^{k-3}}}\}.$
In a similar fashion as in Theorem \ref{f.1}, we can show that all the vertices in the layers $V_1, V_2, ..., V_{k-1}$ of $LSG(n, m, k)$ have different representations with respect to $W$. So, this subset is also a  resolving set in $LSG(n, m, k)$ of cardinality  $nm^{k-2}$. We show  that this subset is  a doubly resolving set for $LSG(n, m, k)$. It is  sufficient  to prove that  for two vertices $u$ and $v$ in $LSG(n, m, k)$, there are vertices $x, y \in W$ such that $d(u, x) - d(u, y) \neq d(v, x) - d(v, y)$. Consider two vertices $u$ and $v$ in  $LSG(n, m, k)$. Then, we have the following: \newline

Case 3.1. Suppose that both vertices  $u$ and $v$ lie in the layer $V_1$. Hence,  there are  $r, s \in \{1, 2, ..., n\}$ such that $u=r$ and $v=s$. Moreover, we know that the layer Sun  graph $LSG(n, m, k)$ has the property that, for each vertex $r$ in  the layer $V_1$, there is some  vertex such as $x=(v_{r_j}, t)^k$  in the component $B^{(k)}_{r_j}$,  $1\leq j\leq m^{k-3}$   in  the layer $V_k$,  at distance $k-1$ from $u$; in fact, $d(u, x)=k-1$.
In the same way, there is some  vertex such as $y=(v_{s_j}, t)^k$ in  the component $B^{(k)}_{s_j}$ in  the layer $V_k$,  at distance $k-1$ from $v$.  In particular, it is easy to prove that $d(u, x) - d(u, y)<0$ because $d(u, y)\geq k$. Also, $d(v, x) - d(v, y)>0$  because  $d(v, x)\geq k$.
\newline

Case 3.2. Now, suppose that both vertices  $u$ and $v$ lie in the layer $V_2$. In a similar way as in Case 3.1, we can show that there are vertices $x, y \in W$ such that $d(u, x) - d(u, y) \neq d(v, x) - d(v, y)$.\newline

Case 3.3. Suppose that both vertices  $u$ and $v$ lie in the layer $V_l$, $l\geq 3$ such that these vertices  lie in the  one component of the layer
$V_l$, say $B^{(l)}_{i_j}$, $1\leq i \leq n$, $1\leq j\leq m^{l-3}$.
In this case,  $d(u, v)=2$. Moreover, we know that the layer Sun  graph $LSG(n, m, k)$ has the property that, for each vertex
$u\in B^{(l)}_{i_j}$ in the layer $V_l$, there is a component of the layer $V_k$, say  $B^{(k)}_{i_r}$, $1\leq i\leq n$, $1\leq r \leq m^{k-3}$ such that for any vertex $x\in B^{(k)}_{i_r}$, we have $d(u,x)=k-l$. In the same way, for the vertex $v\in B^{(l)}_{i_j}$ in  the layer $V_l$, there is a component of the layer $V_k$, say  $B^{(k)}_{i_s}$, $1\leq i\leq n$, $1\leq s \leq m^{k-3}$, $r\neq s$ such that, for any vertex $y\in B^{(k)}_{i_s}$, we have $d(v,y)=k-l$. Thus, $d(u, x) - d(u, y) \neq d(v, x) - d(v, y)$ because $d(u,y)=k-l+2$ and $d(v,x)=k-l+2$.\newline

Case 3.4. Suppose that both vertices  $u$ and $v$ lie in the layer $V_l$, $l\geq 3$ such that these vertices   lie in the  two distinct  components of the layer $V_l$. We can assume without loss of generality that $u\in B^{(l)}_{p_{j_1}}$ and
$v\in B^{(l)}_{q_{j_2}}$, $1\leq p, q \leq n$, and $1\leq {j_1}, {j_2}\leq m^{l-3}$.
Moreover, we know that the layer Sun  graph $LSG(n, m, k)$ has the property that, for each vertex $u\in B^{(l)}_{p_{j_1}}$ in  the layer $V_l$, there is a component of the layer $V_k$, say  $B^{(k)}_{p_r}$, $1\leq r \leq m^{k-3}$ such that for any vertex $x\in B^{(k)}_{p_r}$, we have $d(u,x)=k-l$. In the same way, for the vertex $v\in B^{(l)}_{q_{j_2}}$ in  the layer $V_l$, there is a component of the layer $V_k$, say  $B^{(k)}_{q_s}$, $1\leq s \leq m^{k-3}$,  such that for any vertex $y\in B^{(k)}_{q_s}$, we have $d(v,y)=k-l$. In the following,
let two components $ B^{(l)}_{p_{j_1}}$, and $B^{(l)}_{q_{j_2}}$ are fundamental; indeed,  $p=q$. Hence, $d(u, v)=2l-4$, $d(u, y)=d(v, x)=k+l-4$.
Thus, $d(u, x) - d(u, y) \neq d(v, x) - d(v, y)$.
Now, let two components $ B^{(l)}_{p_{j_1}}$, and $B^{(l)}_{q_{j_2}}$ are not fundamental; indeed,  $p\neq q$. Hence $d(u, v)=2l-2+d_{C_n}(p, q)$,
$d(u, y)=d(v, x)=k+l-2+d_{C_n}(p, q)$.
Thus, $d(u, x) - d(u, y) \neq d(v, x) - d(v, y)$.\newline

Case 3.5.
Suppose that  vertices  $u$ and $v$ lie in distinct layers $V_a, V_b$, respectively. Note that if $a=1$ and $b=2$, $a=1$ and $b>2$, or $a=2$ and $b>2$, there is nothing  to do. Now, let $3\leq a< b$. Hence, there is a  component of the layer $V_a$, say
$B^{(a)}_{i_j}$, $1\leq i\leq n$, $1\leq j \leq m^{a-3}$ such that $u\in B^{(a)}_{i_j}$.
Also, there is a  component of the layer $V_b$, say  $B^{(b)}_{p_q}$, $1\leq p\leq n$, $1\leq q \leq m^{b-3}$ such that $v\in B^{(b)}_{p_q}$.
In particular, there is a component of the layer $V_k$, say  $B^{(k)}_{i_r}$, $1\leq i\leq n$, $1\leq r \leq m^{k-3}$ such that for any vertex
$x\in B^{(k)}_{i_r}$ we have $d(u, x)=k-a$. Now, let $i=p$;  if we consider
$y\in B^{(k)}_{z_s}$, $z\neq i$, $1\leq z\leq n$, and $1\leq s \leq m^{k-3}$, then we have $d(u, x) - d(u, y) \neq d(v, x) - d(v, y)$.
Because $d(u, y)=k+a-2+ d_{C_n}(i, z)$, $d(v, y)=k+b-2+ d_{C_n}(i, z)$ and $d(u, x) \neq d(v, x)$.
Note that if $i\neq p$, then  there is a component of the layer $V_k$, say  $B^{(k)}_{p_s}$, $1\leq p\leq n$, $1\leq s \leq m^{k-3}$ such that for any vertex $y\in B^{(k)}_{p_j}$, we have $d(v, y)=k-b$, and then we have $d(u, x) - d(u, y) \neq d(v, x) - d(v, y)$ because $d(u, x) - d(u, y)< 0$ and
$d(v, x) - d(v, y)> 0$.
Thus, from the  above-mentioned  cases we can be concluded that the  cardinality of minimum doubly resolving set of
the layer Sun graph $LSG(n, m, k)$ is  $nm^{k-2}$.
\end{proof}
\begin{theorem}\label{f.3}
Let $G=LSG(n, m, k)$ be the layer Sun  graph which is defined already. Suppose that $n, m, k$ are integers such that $n, k\geq 3$ and $m\geq 2$. Then the strong metric dimension of
$LSG(n, m, k)$ is  $nm^{k-2}-1$.
\end{theorem}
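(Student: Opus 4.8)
The plan is to compute $sdim(LSG(n,m,k))$ through its strong resolving graph, in the spirit of Oellermann and Peters-Fransen [4]. Recall that a vertex $u$ is \emph{maximally distant} from a vertex $v$ if $d(w,v)\le d(u,v)$ for every neighbour $w$ of $u$, that $u$ and $v$ are \emph{mutually maximally distant} (MMD) if each is maximally distant from the other, and that $sdim(G)$ equals the minimum cardinality of a vertex cover of the strong resolving graph $G_{SR}$, whose vertex set is $V(G)$ and whose edges are exactly the MMD pairs. So the whole task reduces to identifying $G_{SR}$ for $G=LSG(n,m,k)$.

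First I would record the structural picture implicit in the definition: $LSG(n,m,k)$ is the cycle $C_n$ with, pendant at each vertex $i\in V_1$ via the single edge $iv_i$, a copy $T_i$ of the complete rooted $m$-ary tree of depth $k-2$, whose root is $v_i\in V_2$ and whose $m^{k-2}$ leaves are precisely the vertices of $V_k$ lying below $v_i$. Every vertex of $V_k$ therefore has degree $1$, so each of the $nm^{k-2}$ vertices of $V_k$ is maximally distant from every other vertex of $G$; in particular any two of them form an MMD pair, and hence $G_{SR}$ contains a clique on $V_k$.

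The heart of the argument is to show that these are \emph{all} the edges of $G_{SR}$, i.e.\ that no vertex $w\notin V_k$ is maximally distant from any vertex $u$. I would split on the layer of $w$ and use that each $T_i$ is attached to $C_n$ by a single edge, so distances concatenate cleanly: if $u$ lies in the subtree rooted at $w$, then the neighbour of $w$ towards its parent is strictly farther from $u$; if $u$ lies outside that subtree, then every child of $w$ is strictly farther from $u$; and for $w$ a cycle vertex $i$ the pendant neighbour $v_i$ plays the role of a ``child'' while the two cycle neighbours $i\pm1$ play the role of a ``parent'' (here $n\ge3$ is used so that $d(i\pm1,i)=1$). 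In each case $w$ has a neighbour strictly farther from $u$, hence is not maximally distant from $u$. It follows that $G_{SR}$ is the disjoint union of the clique $K_{nm^{k-2}}$ on $V_k$ with $|V_1\cup\cdots\cup V_{k-1}|$ isolated vertices; the minimum vertex cover of $K_{nm^{k-2}}$ has size $nm^{k-2}-1$, and this is therefore $sdim(LSG(n,m,k))$.

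The main, though essentially routine, obstacle is the case analysis in the previous paragraph: one must treat the cycle layer $V_1$, the pendant layer $V_2$, and the interior tree layers $V_3,\dots,V_{k-1}$ separately, and within each handle the two positions of $u$ relative to $w$. If one prefers not to invoke the vertex-cover characterization, the same facts give the result directly. The only vertices that strongly resolve an MMD pair $\{x,y\}$ are $x$ and $y$ themselves (if some $w\neq x,y$ did, with say $x$ on a shortest $y-w$ path, the neighbour of $x$ on that path would be farther from $y$ than $x$, contradicting maximal distantness); hence any strong resolving set must contain a vertex from each pair in $V_k$, so it omits at most one vertex of $V_k$ and has size $\ge nm^{k-2}-1$. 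Conversely, $W=V_k\setminus\{z\}$ for any fixed $z\in V_k$ is a strong resolving set: given distinct $a,b\in V(G)$, if one of them is a leaf different from $z$ it is an endpoint of a shortest $a-b$ path and so strongly resolves $\{a,b\}$; otherwise one prolongs a shortest $a-b$ path through its non-leaf endpoint into some pendant tree until it reaches a leaf $w\neq z$, which then strongly resolves $\{a,b\}$. Either route yields $sdim(LSG(n,m,k))=nm^{k-2}-1$.
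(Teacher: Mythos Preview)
Your argument is correct and takes a genuinely different route from the paper's. The paper proves the lower bound by observing directly that if two distinct degree-one vertices of $V_k$ are omitted from a candidate set $W$, then neither can lie on a shortest path from the other to any $w\in W$, so $|W|\ge nm^{k-2}-1$; it then establishes the upper bound by an explicit seven-subcase verification (Cases~2.1--2.7) that $W=V_k\setminus\{(v_{1_1},1)^k\}$ strongly resolves every pair $u,v$, treating separately the various positions of $u,v$ among the layers $V_1,\dots,V_k$. You instead compute the strong resolving graph $G_{SR}$: after noting that every leaf is maximally distant from every other vertex, you show that every non-leaf $w$ has, for each $u$, a neighbour strictly farther from $u$ (parent if $u$ lies below $w$, a child or the pendant $v_i$ otherwise), so the MMD pairs are exactly the pairs of leaves and $G_{SR}=K_{nm^{k-2}}\cup(\text{isolates})$; the Oellermann--Peters-Fransen reduction $sdim(G)=\tau(G_{SR})$ then gives $nm^{k-2}-1$ immediately. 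This is cleaner and shorter---one paragraph of structural case analysis yields both bounds simultaneously---whereas the paper's upper-bound verification is considerably longer; on the other hand the paper's argument is self-contained and does not invoke the vertex-cover characterisation. Your closing ``direct'' alternative, extending a geodesic past a non-leaf endpoint down to a leaf $w\neq z$, is essentially a compressed form of the paper's Case~2; it would be worth saying explicitly that the extension can always avoid the single excluded leaf $z$ because each branching vertex has $m\ge2$ children.
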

\begin{proof}
In the following cases, it  can be seen that the cardinality of minimum strong resolving set of the layer Sun  graph $LSG(n, m, k)$ is  $nm^{k-2}-1$.\newline

Case 1.
We know that the ordered subset
$$W=\{B^{(k)}_{1_1}-(v_{1_1}, 1)^k, B^{(k)}_{1_2}-(v_{1_2}, 1)^k, ..., B^{(k)}_{1_{m^{k-3}}}-(v_{1_{m^{k-3}}}, 1)^k;   ...; B^{(k)}_{n_1}-(v_{n_1}, 1)^k, B^{(k)}_{n_2}-(v_{n_2}, 1)^k, ..., B^{(k)}_{n_{m^{k-3}}}-(v_{n_{m^{k-3}}}, 1)^k\},$$
of vertices in  the layer $V_k$ of the layer Sun graph $LSG(n, m, k)$ is   a resolving set for $LSG(n, m, k)$ of  cardinality $nm^{k-2}-nm^{k-3}$.
Now, let
$N=V_k- W=\{(v_{1_1}, 1)^k, ..., (v_{1_{m^{k-3}}}, 1)^k, ...,(v_{n_1}, 1)^k, ..., (v_{n_{m^{k-3}}}, 1)^k \}.$
By considering  distinct vertices $u, v\in N$  we can show that there is not a vertex $w\in W$ such that $u$ belongs to a shortest $v - w$ path or $v$ belongs to a shortest $u - w$ path because the valency of every  vertex in the layer $V_k$ is one.
So, this subset  is not a  strong resolving set for $G$.  Thus, we can be conclude that if   $W$ is a strong resolving set for graph $G$,
then $|W|\geq nm^{k-2}-1$, because $|N|$ must be less than $2$.\newline

Case 2.
On the other hand,  we  can show that the subset
$$W=\{B^{(k)}_{1_1}-(v_{1_1}, 1)^k, B^{(k)}_{1_2}, ..., B^{(k)}_{1_{m^{k-3}}},   ..., B^{(k)}_{n_1}, B^{(k)}_{n_2}, ..., B^{(k)}_{n_{m^{k-3}}}\},$$
of vertices in  the graph $G$ is a  resolving set for  graph $G$. We show that  this  subset  is a strong resolving set in  graph $G$.
It is sufficient to prove that  every two distinct vertices $u,v \in V(G)-W$ is strongly resolved by a vertex $w\in W$. Then, we have the following:\newline

Case 2.1. Suppose that both vertices  $u$ and $v$ lie in the layer $V_1$. Hence,  there are  $r, s \in \{1, 2, ..., n\}$ such that $u=r$ and $v=s$. Moreover, we know that the layer Sun  graph $LSG(n, m, k)$ has the property that, for each vertex $r$ in  the layer $V_1$, there is a component  $B^{(k)}_{r_j}$,  $1\leq j\leq m^{k-3}$   in  the layer $V_k$ such that, for every vertex such as $w \in B^{(k)}_{r_j}$, we have $d(u, w)=k-1$ and $d(v, w)>k-1$, and hence, $u$ belongs to a shortest $w - v$ path.\newline

Case 2.2. Now suppose that both vertices  $u$ and $v$ lie in the layer $V_2$. In a similar way as in Case 2.1, we can show that the vertices $u$ and $v$ is strongly resolved by a vertex $w\in W$.
\newline

Case 2.3. Suppose that both vertices  $u$ and $v$ lie in the layer $V_l$, $l\geq 3$ such that these vertices  lie in the  one component of the layer
$V_l$, say $B^{(l)}_{i_j}$, $1\leq i \leq n$, $1\leq j\leq m^{l-3}$.
In this case,  $d(u, v)=2$. Moreover, we know that the layer Sun  graph $LSG(n,m, k)$ has the property that, for each vertex
$u\in B^{(l)}_{i_j}$ in the layer $V_l$, there is a component of the layer $V_k$, say  $B^{(k)}_{i_r}$, $1\leq i\leq n$, $1\leq r \leq m^{k-3}$ such that, for any vertex $w\in B^{(k)}_{i_r}$, we have $d(u,w)=k-l$, and hence, $u$ belongs to a shortest $w - v$ path.\newline

Case 2.4. Suppose that both vertices  $u$ and $v$ lie in the layer $V_l$, $l\geq 3$ such that these vertices   lie in the  two distinct  components of the layer $V_l$. We can assume without loss of generality that $u\in B^{(l)}_{p_{j_1}}$ and
$v\in B^{(l)}_{q_{j_2}}$, $1\leq p, q \leq n$, and $1\leq {j_1}, {j_2}\leq m^{l-3}$.
Moreover, we know that the layer Sun  graph $LSG(n,m, k)$ has the property that, for each vertex $u\in B^{(l)}_{p_{j_1}}$ in  the layer $V_l$, there is a component of the layer $V_k$, say  $B^{(k)}_{p_r}$, $1\leq r \leq m^{k-3}$ such that for any vertex $w\in B^{(k)}_{p_r}$, we have $d(u,w)=k-l$,
and hence, $u$ belongs to a shortest $w - v$ path.\newline

Case 2.5.
Suppose that  vertices  $u$ and $v$ lie in distinct layers $V_a, V_b$, respectively. Note that if $a=1$ and $b=2$, $a=1$ and $b>2$ or $a=2$ and $b>2$ there is nothing  to do. Now, let $3\leq b< a$. Hence, there is a  component of the layer $V_a$, say
$B^{(a)}_{i_j}$, $1\leq i\leq n$, $1\leq j \leq m^{a-3}$ such that $u\in B^{(a)}_{i_j}$.
Also, there is a  component of the layer $V_b$, say  $B^{(b)}_{p_q}$, $1\leq p\leq n$, $1\leq q \leq m^{b-3}$ such that $v\in B^{(b)}_{p_q}$.
In particular, there is a component of the layer $V_k$, say  $B^{(k)}_{i_r}$, $1\leq i\leq n$, $1\leq r \leq m^{k-3}$ such that, for any vertex
$w\in B^{(k)}_{i_r}$, we have $d(u, w)=k-a$, and hence, $u$ belongs to a shortest $w - v$ path.\newline

Case 2.6.
Let  $u, v$ be two distinct vertices in $V(G)-W$ such that  $u=(v_{1_1}, 1)^k\in V_k$ and $ v\in V_l$, $l\geq 3$. So, there is a component $B^{(l)}_{r_{j_1}}$, in the layer $V_{l}$, $1\leq r \leq n$ and
$1\leq {j_1} \leq m^{l-3}$ such that $v\in B^{(l)}_{r_{j_1}}$. Thus,  there is some vertex in the component $B^{(k)}_{r_{j_2}}$, $1\leq {j_2} \leq m^{k-3}$ say $w$ such that $d(w, v)=k-l$, $d(w, u)>k-l$, and $v$ belongs to a shortest $w - u$ path.\newline

Case 2.7.
Let $u, v$ be two distinct vertices in $V(G)-W$ such that  $u=(v_{1_1}, 1)^k\in V_k$ and
$ v\in V_1$.  So, there is some   $i\in V_1= \{1, 2, ..., n \}$
such that $v=i$. If $i=1$, indeed  $d(u, v)= k-1$, and then, there is a component $B^{(k)}_{r_j}$, in the layer $V_k$, $r\neq 1$ and $1\leq j \leq m^{k-3}$ such that, for every vertex such as $w$ in the component $B^{(k)}_{r_j}$ we have $d(w, v)\geq k$,  $d(w, u)\geq 2k-1$ and $v$ belongs to a shortest $w - u$ path.
Now, let $i\neq 1$; indeed  $d(u, v)\geq k$, and hence, there  is a component $B^{(k)}_{i_j}$, in the layer $V_k$,  $1\leq j \leq m^{k-3}$ such that,
for every vertices such as $w$ in the components $B^{(k)}_{i_j}$, we have $d(w, v)=k-1$, $d(w, u)\geq 2k-1$, and $v$ belongs to a shortest $w - u$ path.
\newline
Thus, from the above-mentioned  cases we can be concluded that the  cardinality of minimum strong resolving set of
the layer Sun graph $LSG(n, m, k)$ is  $nm^{k-2}-1$.
\end{proof}
\noindent \textbf{ Minimal Doubly Resolving Sets and the Strong Metric Dimension  for the Line Graph of  Layer Sun Graph $LSG(n, m, k)$ }\\
Let $G=LSG(n, m, k)$ be the layer Sun  graph which is defined already. Now,
let $H$ be a graph with vertex set $V(H)=U_1 \cup U_2\cup ...\cup U_k$, where   $U_1, U_2, ..., U_k$ are called the layers  of $H$ which is defined as follows:

Let $U_1=V(C_n)=\{1, 2, ..., n\}$,  $U_2=\{u_1, u_2, ..., u_n\}$, and for $l\geq 3$ we have
$$U_l=\{D^{(l)}_{1_1}, D^{(l)}_{1_2}, ..., D^{(l)}_{1_{m^{l-3}}}; D^{(l)}_{2_1}, D^{(l)}_{2_2}, ..., D^{(l)}_{2_{m^{l-3}}};  ...; D^{(l)}_{n_1}, D^{(l)}_{n_2}, ..., D^{(l)}_{n_{m^{l-3}}}\},$$ and let   $D^{(l)}_{ij}=\{\cup_{t=1} ^ {m} (u_{ij}, t)^{l}\}$ such that  every $(u_{ij}, t)^{l}$ is a vertex in the layer $U_l$ and $D^{(l)}_{ij}\cong K_m$ in the layer $U_l$,
$1\leq i \leq n$, $1\leq j \leq {m^{l-3}}$, and $1\leq t \leq m$, where  $K_m$ is the complete graph on $m$ vertices.
Now, suppose that every vertex $i\in\{2, 3, ..., n\}$ in the cycle  $C_n$  or the layer $U_1$  is adjacent to exactly  two vertices  in the layer
$U_2$, say $u_i, u_{i-1} \in U_2 $. In particular, for the vertex $1$  in the layer $U_1$, we have $1$  is adjacent to exactly  two vertices  in the layer $U_2$, say $u_1, u_{n} \in U_2$.
 Also,  every vertex $u_i$ in the layer $U_2$ is adjacent to exactly $m$ vertices
$(u_{i1}, 1)^3, (u_{i1}, 2)^3, ..., (u_{i1}, m)^3\in D^{(3)}_{i1}\in U_3$, in particular for $l\geq 3$,  every vertex $(u_{ir}, t)^l \in D^{(l)}_{i_r}\in U_l $ is adjacent to  exactly $m$ vertices $\cup_{t=1} ^ m (u_{ij}, t)^{l+1}\in D^{(l+1)}_{i_j}\in U_{l+1}$, and
then the resulting graph is isomorphic with  the line graph of the layer Sun graph $LSG(n, m, k)$ with parameters $n$, $m$ and $k$;
in fact, $L(G)\cong H$. Note that  simply we use refinement of the natural relabelling of the line graph
of the graph $LSG(n, m, k)$.
Also, for $l\geq 3$,  we recall that $D^{(l)}_{i_j}$ as the  components of $U_l$, $1\leq i \leq n$, $1\leq j \leq {m^{l-3}}$. In particular, we say that two components
$D^{(l)}_{i_j}, D^{(l)}_{r_s}$,$1\leq i, r \leq n$, $1\leq j, s \leq m^{l-3}$  are fundamental if $i=r$ and $j\neq s$.
It is natural to consider its vertex set  of the line graph of the layer Sun  graph $LSG(n, m, k)$ is also as partitioned into $k$ layers. The layers $U_1$ and $U_2$ consist of the vertices $\{1, 2, ..., n\}$ and $\{u_1, u_2, ..., u_n\}$, respectively. In particular, each layer $U_l$ ($l\geq 3$),  consists of the $nm^{l-2}$ vertices. Note that, for each vertex $i$ in the layer $U_1$, and every vertex $x\in D^{(l)}_{ij}\in U_l$,  $l\geq 3$, $1\leq j \leq {m^{l-3}}$, we have
$d(i, x)=l-1$. In this section, we consider the problem of determining the cardinality $\psi(L(G))$  of minimal doubly resolving sets of  the line graph of the layer Sun  graph $LSG(n, m, k)$. We find the minimal doubly resolving set for the line graph of the layer Sun  graph $LSG(n, m, k)$, and in fact, we prove that  if $n, k\geq3$  and $m\geq2$ then the  minimal doubly resolving set of  the line graph of the layer Sun  graph $LSG(n, m, k)$ is $nm^{k-2}-nm^{k-3}$.
 Figure 2 shows the line graph of the graph $LSG(3, 3, 4)$. Note that  simply we use refinement of the natural relabelling of the line graph
of the graph $LSG(3, 3, 4)$.

\begin{center}
\begin{tikzpicture}[line cap=round,line join=round,>=triangle 45,x=4.0cm,y=4.0cm]
\clip(6.622968924336641,0.40745613982090334) rectangle (10.200267793037511,3.1914421740027);
\draw (8.4,2.4)-- (8.,2.);
\draw (8.,2.)-- (8.8,2.);
\draw (8.8,2.)-- (8.4,2.4);
\draw (8.8,2.4)-- (8.4,2.4);
\draw (8.8,2.4)-- (8.8,2.);
\draw (8.,2.4)-- (8.4,2.4);
\draw (8.,2.4)-- (8.,2.);
\draw (8.,2.)-- (8.4,1.8);
\draw (8.4,1.8)-- (8.8,2.);
\draw (7.8,2.6)-- (7.6,2.6);
\draw (7.6,2.6)-- (7.6,2.4);
\draw (7.6,2.4)-- (7.8,2.6);
\draw (7.8,2.6)-- (8.,2.4);
\draw (7.6,2.6)-- (8.,2.4);
\draw (7.6,2.4)-- (8.,2.4);
\draw (8.2,1.6)-- (8.4,1.4);
\draw (8.4,1.4)-- (8.6,1.6);
\draw (8.6,1.6)-- (8.2,1.6);
\draw (8.2,1.6)-- (8.4,1.8);
\draw (8.4,1.4)-- (8.4,1.8);
\draw (8.6,1.6)-- (8.4,1.8);
\draw (9.2,2.4)-- (9.2,2.6);
\draw (9.2,2.6)-- (9.,2.6);
\draw (9.2,2.4)-- (9.,2.6);
\draw (9.2,2.4)-- (8.8,2.4);
\draw (9.2,2.6)-- (8.8,2.4);
\draw (9.,2.6)-- (8.8,2.4);
\draw (8.2,2.6)-- (8.2,2.8);
\draw (8.2,2.8)-- (7.8,2.8);
\draw (7.8,2.8)-- (8.2,2.6);
\draw (8.2,2.6)-- (7.8,2.6);
\draw (8.2,2.8)-- (7.8,2.6);
\draw (7.8,2.8)-- (7.8,2.6);
\draw (7.4,2.8)-- (7.,2.8);
\draw (7.,2.8)-- (7.,2.6);
\draw (7.,2.6)-- (7.4,2.8);
\draw (7.4,2.8)-- (7.6,2.6);
\draw (7.,2.8)-- (7.6,2.6);
\draw (7.,2.6)-- (7.6,2.6);
\draw (7.,2.4)-- (7.,2.2);
\draw (7.,2.2)-- (7.4,2.2);
\draw (7.4,2.2)-- (7.,2.4);
\draw (7.,2.4)-- (7.6,2.4);
\draw (7.4,2.2)-- (7.6,2.4);
\draw (7.4,1.6)-- (7.4,1.4);
\draw (7.4,1.4)-- (7.8,1.4);
\draw (7.4,1.6)-- (7.8,1.4);
\draw (7.4,1.6)-- (8.2,1.6);
\draw (7.8,1.4)-- (8.2,1.6);
\draw (7.4,1.4)-- (8.2,1.6);
\draw (8.,1.2)-- (8.4,1.);
\draw (8.8,1.2)-- (8.4,1.);
\draw (8.8,1.2)-- (8.,1.2);
\draw (8.,1.2)-- (8.4,1.4);
\draw (8.4,1.)-- (8.4,1.4);
\draw (8.8,1.2)-- (8.4,1.4);
\draw (9.,1.4)-- (9.4,1.4);
\draw (9.4,1.4)-- (9.4,1.6);
\draw (9.4,1.6)-- (9.,1.4);
\draw (9.,1.4)-- (8.6,1.6);
\draw (9.4,1.4)-- (8.6,1.6);
\draw (9.4,1.6)-- (8.6,1.6);
\draw (9.4,2.2)-- (9.8,2.2);
\draw (9.8,2.2)-- (9.8,2.4);
\draw (9.8,2.4)-- (9.4,2.2);
\draw (9.8,2.2)-- (9.2,2.4);
\draw (9.4,2.2)-- (9.2,2.4);
\draw (9.8,2.4)-- (9.2,2.4);
\draw (9.8,2.6)-- (9.8,2.8);
\draw (9.8,2.8)-- (9.4,2.8);
\draw (9.4,2.8)-- (9.8,2.6);
\draw (9.8,2.6)-- (9.2,2.6);
\draw (9.8,2.8)-- (9.2,2.6);
\draw (9.4,2.8)-- (9.2,2.6);
\draw (9.,2.8)-- (8.6,2.8);
\draw (8.6,2.6)-- (8.6,2.8);
\draw (8.6,2.6)-- (9.,2.8);
\draw (9.,2.8)-- (8.6,2.8);
\draw (9.,2.8)-- (9.,2.6);
\draw (8.6,2.8)-- (9.,2.6);
\draw (8.6,2.6)-- (9.,2.6);
\draw (8.017425645882645,0.7868666258952373) node[anchor=north west] {Figure 2: The line graph $LSG(3, 3, 4)$};
\draw (7.,2.2)-- (7.6,2.4);
\begin{scriptsize}
\draw [fill=black] (8.4,2.4) circle (1.5pt);
\draw[color=black] (8.39190872304685,2.3488552503830764) node {$1$};
\draw [fill=black] (8.,2.) circle (1.5pt);
\draw[color=black] (8.086409370623361,2.0532107157797) node {$2$};
\draw [fill=black] (8.8,2.) circle (1.5pt);
\draw[color=black] (8.692480666560284,2.0532107157797) node {$3$};
\draw [fill=black] (8.,2.4) circle (1.5pt);
\draw[color=black] (8.037135281522797,2.3585648860233015) node {$u_1$};
\draw [fill=black] (8.4,1.8) circle (1.5pt);
\draw[color=black] (8.406690949777019,1.8807514039277302) node {$u_2$};
\draw [fill=black] (8.8,2.4) circle (1.5pt);
\draw[color=black] (8.75160957348096,2.358710068203189) node {$u_3$};
\draw [fill=black] (7.8,2.6) circle (1.5pt);
\draw[color=black] (7.696853702369138,2.6740642384467908) node {$(u_{11}, 1)^3$};
\draw [fill=black] (7.6,2.6) circle (1.5pt);
\draw[color=black] (7.455701030136155,2.545661242425553) node {$(u_{11}, 2)^3$};
\draw [fill=black] (7.6,2.4) circle (1.5pt);
\draw[color=black] (7.673523297437843,2.339000432562964) node {$(u_{11}, 3)^3$};
\draw [fill=black] (8.2,1.6) circle (1.5pt);
\draw[color=black] (8.145538277544036,1.6590180029751977) node {$(u_{21}, 1)^3$};
\draw [fill=black] (8.4,1.4) circle (1.5pt);
\draw[color=black] (8.218723500295444,1.4274297842025527) node {$(u_{21}, 2)^3$};
\draw [fill=black] (8.6,1.6) circle (1.5pt);
\draw[color=black] (8.693061395279835,1.6688728207953103) node {$(u_{21}, 3)^3$};
\draw [fill=black] (9.2,2.4) circle (1.5pt);
\draw[color=black] (9.125366739745801,2.339000432562964) node {$(u_{31}, 1)^3$};
\draw [fill=black] (9.2,2.6) circle (1.5pt);
\draw[color=black] (9.319423055576534,2.5607338335154965) node {$(u_{31}, 2)^3$};
\draw [fill=black] (9.,2.6) circle (1.5pt);
\draw[color=black] (8.854794796232367,2.545370878065778) node {$(u_{31}, 3)^3$};
\draw [fill=black] (8.2,2.6) circle (1.5pt);
\draw[color=black] (8.209594593374767,2.5410241978752715) node {$(u_{11}, 1)^4$};
\draw [fill=black] (8.2,2.8) circle (1.5pt);
\draw[color=black] (8.20466718446471,2.8810154126691545) node {$(u_{11}, 2)^4$};
\draw [fill=black] (7.8,2.8) circle (1.5pt);
\draw[color=black] (7.785837427109927,2.890870230489267) node {$(u_{11}, 3)^4$};
\draw [fill=black] (7.4,2.8) circle (1.5pt);
\draw[color=black] (7.401499532125536,2.876088003759098) node {$(u_{12}, 1)^4$};
\draw [fill=black] (7.,2.8) circle (1.5pt);
\draw[color=black] (6.997452001500919,2.876088003759098) node {$(u_{12}, 2)^4$};
\draw [fill=black] (7.,2.6) circle (1.5pt);
\draw[color=black] (6.889049005479681,2.629717558256284) node {$(u_{12}, 3)^4$};
\draw [fill=black] (7.,2.4) circle (1.5pt);
\draw[color=black] (6.879194187659569,2.432621201854033) node {$(u_{13}, 1)^4$};
\draw [fill=black] (7.,2.2) circle (1.5pt);
\draw[color=black] (6.992524592590863,2.1369766672506567) node {$(u_{13}, 2)^4$};
\draw [fill=black] (7.4,2.2) circle (1.5pt);
\draw[color=black] (7.3965721232154795,2.1320492583406003) node {$(u_{13}, 3)^4$};
\draw [fill=black] (7.4,1.6) circle (1.5pt);
\draw[color=black] (7.386717305395366,1.6633646831657043) node {$(u_{21}, 1)^4$};
\draw [fill=black] (7.4,1.4) circle (1.5pt);
\draw[color=black] (7.386717305395366,1.3584460594617649) node {$(u_{21}, 2)^4$};
\draw [fill=black] (7.8,1.4) circle (1.5pt);
\draw[color=black] (7.790764836019982,1.3387364238215398) node {$(u_{21}, 3)^4$};
\draw [fill=black] (8.,1.2) circle (1.5pt);
\draw[color=black] (7.909022649861333,1.235260836710358) node {$(u_{22}, 1)^4$};
\draw [fill=black] (8.4,1.) circle (1.5pt);
\draw[color=black] (8.446690949777019,0.9494711199270938) node {$(u_{22}, 2)^4$};
\draw [fill=black] (8.8,1.2) circle (1.5pt);
\draw[color=black] (8.919867387322312,1.2106237921600767) node {$(u_{22}, 3)^4$};
\draw [fill=black] (9.,1.4) circle (1.5pt);
\draw[color=black] (9.027544472444113,1.3288816060014272) node {$(u_{23}, 1)^4$};
\draw [fill=black] (9.4,1.4) circle (1.5pt);
\draw[color=black] (9.421737185248617,1.3338090149114836) node {$(u_{23}, 2)^4$};
\draw [fill=black] (9.4,1.6) circle (1.5pt);
\draw[color=black] (9.475938683259235,1.6590180029751977) node {$(u_{23}, 3)^4$};
\draw [fill=black] (9.4,2.2) circle (1.5pt);
\draw[color=black] (9.406954958518448,2.1419040761607127) node {$(u_{31}, 1)^4$};
\draw [fill=black] (9.8,2.2) circle (1.5pt);
\draw[color=black] (9.820857306963177,2.146831485070769) node {$(u_{31}, 2)^4$};
\draw [fill=black] (9.8,2.4) circle (1.5pt);
\draw[color=black] (9.879986213883852,2.4523308374942583) node {$(u_{31}, 3)^4$};
\draw [fill=black] (9.8,2.6) circle (1.5pt);
\draw[color=black] (9.909841031703964,2.639572376076397) node {$(u_{32}, 1)^4$};
\draw [fill=black] (9.8,2.8) circle (1.5pt);
\draw[color=black] (9.820857306963177,2.9007250483093796) node {$(u_{32}, 2)^4$};
\draw [fill=black] (9.4,2.8) circle (1.5pt);
\draw[color=black] (9.41680977633856,2.895797639399323) node {$(u_{32}, 3)^4$};
\draw [fill=black] (9.,2.8) circle (1.5pt);
\draw[color=black] (9.012762245713944,2.9056524572194355) node {$(u_{33}, 1)^4$};
\draw [fill=black] (8.6,2.8) circle (1.5pt);
\draw[color=black] (8.613642123999384,2.9007250483093796) node {$(u_{33}, 2)^4$};
\draw [fill=black] (8.6,2.6) circle (1.5pt);
\draw[color=black] (8.563642123999384,2.540588651335609) node {$(u_{33}, 3)^4$};
\end{scriptsize}
\end{tikzpicture}
\end{center}



\begin{theorem}\label{m.2}
Let $G=LSG(n, m, k)$ be the layer Sun  graph which is defined already. Suppose that $n, m, k$ are integers such that $n, k\geq 3$ and $m\geq 2$.
Then,  the   cardinality of minimum doubly resolving set in  the line graph of the graph $G$ is   $nm^{k-2}-nm^{k-3}$.
\end{theorem}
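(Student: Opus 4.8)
The plan is to prove $\psi(L(G))=nm^{k-2}-nm^{k-3}$ by a matching pair of bounds, throughout identifying $L(G)$ with the relabelled graph $H$ whose layers $U_1,\dots,U_k$ and components $D^{(l)}_{i_j}$ were described above; recall that in $H$ every component $D^{(l)}_{i_j}$ is a clique $K_m$ and that the bottom layer $U_k$ consists of the $nm^{k-2}$ vertices lying in the $nm^{k-3}$ components $D^{(k)}_{i_j}$.

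\emph{Lower bound.} First I would note that the $m$ vertices $(u_{i_j},1)^k,\dots,(u_{i_j},m)^k$ of a bottom component $D^{(k)}_{i_j}$ are pairwise true twins: each is adjacent exactly to its $m-1$ block-mates and to the single vertex of $U_{k-1}$ feeding $D^{(k)}_{i_j}$, and any path leaving the component must pass through that feeding vertex; hence any two of them are equidistant from every other vertex of $H$ and no vertex resolves such a pair. Consequently any resolving set — in particular, by Proposition \ref{b.3}, any doubly resolving set — must contain at least $m-1$ of the vertices of each of the $nm^{k-3}$ bottom components, so $\psi(H)\ge (m-1)nm^{k-3}=nm^{k-2}-nm^{k-3}$.

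\emph{Upper bound.} I would take the set
$$W=\{D^{(k)}_{1_1}-(u_{1_1},1)^k,\ D^{(k)}_{1_2}-(u_{1_2},1)^k,\ \dots,\ D^{(k)}_{n_{m^{k-3}}}-(u_{n_{m^{k-3}}},1)^k\},$$
of cardinality exactly $nm^{k-2}-nm^{k-3}$, and show it is doubly resolving. Two distance facts do all the work: (i) inside any bottom component all distances equal $1$, so a deleted vertex $(u_{i_j},1)^k$ is at distance $1$ from each of its $m-1$ block-mates, which lie in $W$; and (ii) for a vertex $u$ in layer $U_a$ there is a bottom component lying in a subtree below $u$ all of whose $W$-vertices are at distance $k-a$ from $u$, while any bottom vertex not lying below $u$ is at distance strictly greater than $k-a$. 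First I would check, as in Theorem \ref{f.1}, that $W$ is a resolving set (the vertices outside $W$ are those of $U_1\cup\dots\cup U_{k-1}$ together with the deleted bottom vertices, and (ii) separates them). Then, for a pair $u,v$, I would argue by cases on their layers exactly as in Case~3 of Theorem \ref{f.2}: when $u,v$ lie in a common layer, pick $x$ in a bottom component below $u$ and $y$ in a bottom component below $v$ chosen in a different subtree, so that $d(u,x)-d(u,y)<0<d(v,x)-d(v,y)$; when $u,v$ lie in different layers the analogous choice again separates them. For the cycle layer $U_1$ one reads ``below $u$'' through the two $U_2$-neighbours of the cycle vertex $u$, the only extra care being that distinct cycle vertices still possess descendant columns that are not shared, which holds since each cycle vertex has two descendant columns and $n\ge 3$.

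\emph{The new cases and the main obstacle.} The configurations absent from Theorem \ref{f.2} are those in which $u$ (or $v$) is one of the deleted vertices $(u_{i_j},1)^k$. Here, by fact (i), I would take $x$ to be a block-mate of $u$ lying in $W$, so that $d(u,x)=1$, and $y$ any vertex of $W$ outside $u$'s component; then $d(u,x)-d(u,y)=1-d(u,y)\le -1$, and in each configuration — $v$ the vertex of $U_{k-1}$ feeding $u$'s block, $v$ elsewhere in $U_{k-1}$, $v$ in a higher layer, $v$ another deleted bottom vertex, $v\in W$ — a suitable choice of $y$ (below $v$ and outside $u$'s component) forces $d(v,x)-d(v,y)$ to be either $0$ or strictly positive, hence different. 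In particular $u=(u_{i_j},1)^k$ is doubly resolved from the $U_{k-1}$-vertex feeding its block, which is precisely the configuration that fails for $LSG(n,m,k)$ itself (Case~1 of Theorem \ref{f.2}). I expect this doubly-resolving verification, and the bookkeeping of which component each chosen witness does or does not lie below, to be the main obstacle; the conceptual content is that passing to the line graph turns the empty components $\overline{K_m}$ into cliques $K_m$, so a deleted bottom vertex is no longer a pendant and its horizontal edges supply the two witnesses that separate it from its parent — which is exactly why the answer drops from $nm^{k-2}$ to $nm^{k-2}-nm^{k-3}$.
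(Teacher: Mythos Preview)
Your proposal is correct and follows essentially the same route as the paper: the same candidate set $W$, the same layer-by-layer case analysis for the upper bound, and a lower bound (phrased by you via the true-twin observation, by the paper via $\beta(L(G))=|W|$ through the analogue of Theorem~\ref{f.1}) that ultimately rests on the same fact that each bottom clique must contribute $m-1$ vertices to any resolving set. Your explicit treatment of pairs involving a deleted vertex $(u_{i_j},1)^k$ is in fact more careful than the paper's own Cases~3--5, which are written for general $l\ge 3$ but do not cleanly specialize to $l=k$.
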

\begin{proof}
Let $W$  be an ordered subset of the layer $U_k$ in the line graph of the graph $G$  such that
$$W=\{D^{(k)}_{1_1}-(u_{1_1}, 1)^k, D^{(k)}_{1_2}-(u_{1_2}, 1)^k, ..., D^{(k)}_{1_{m^{k-3}}}-(u_{1_{m^{k-3}}}, 1)^k;   ...; D^{(k)}_{n_1}-(u_{n_1}, 1)^k, D^{(k)}_{n_2}-(u_{n_2}, 1)^k, ..., D^{(k)}_{n_{m^{k-3}}}-(u_{n_{m^{k-3}}}, 1)^k\}.$$
Hence, $$V(L(G))- W=\{U_1, U_2, ..., U_{k-1}, (u_{1_1}, 1)^k, ..., (u_{1_{m^{k-3}}}, 1)^k, ...,(u_{n_1}, 1)^k, ..., (u_{n_{m^{k-3}}}, 1)^k \}.$$ We know that $|W|= nm^{k-2}-nm^{k-3}$. In a similar way as in Theorem \ref{f.1}, we can show that this subset is a minimal resolving set for the line graph of the graph $G$. We prove  that this subset
is  a doubly resolving set for the line graph of the graph $G$, and hence, $\beta(L(G)) =\psi(L(G))$. It is  sufficient  to prove that,  for any two vertices $u$ and $v$ in $L(G)$, there are vertices $x, y \in W$ such that $d(u, x) - d(u, y) \neq d(v, x) - d(v, y)$. Consider two vertices $u$ and $v$ in  $L(G)$. Then, we have the following: \newline

Case 1. Suppose that both vertices  $u$ and $v$ lie in the layer $U_1$. Hence,  there are  $r, s \in \{1, 2, ..., n\}$ such that $u=r$ and $v=s$. Moreover, we know that the line graph of the graph $G$ has the property that, for each vertex $r$ in  the layer $U_1$ there is some  vertex such as $x=(u_{r_j}, t)^k$  in the component $D^{(k)}_{r_j}$,  $1\leq j\leq m^{k-3}$   in  the layer $U_k$,  at distance $k-1$ from $u$, and in fact, $d(u, x)=k-1$.
In the same way, there is some  vertex such as $y=(u_{s_j}, t)^k$ in  the component $D^{(k)}_{s_j}$ in  the layer $U_k$,  at distance $k-1$ from $v$.  In particular, it is easy to prove that $d(u, x) - d(u, y)<0$  because $d(u, y)\geq k$. Also, $d(v, x) - d(v, y)>0$  because  $d(v, x)\geq k$.
\newline

Case 2. Now, suppose that both vertices  $u$ and $v$ lie in the layer $U_2$. In a similar way as in Case 1, we can show that there are vertices $x, y \in W$ such that $d(u, x) - d(u, y) \neq d(v, x) - d(v, y)$.\newline

Case 3. Suppose that both vertices  $u$ and $v$ lie in the layer $U_l$, $l\geq 3$ such that these vertices  lie in the  one component of the layer
$U_l$, say $D^{(l)}_{i_j}$, $1\leq i \leq n$, $1\leq j\leq m^{l-3}$.
In this case,  $d(u, v)=1$. Moreover, we know that the line graph of the graph $G$ has the property that, for each vertex
$u\in D^{(l)}_{i_j}$ in the layer $U_l$, there is a component of the layer $U_k$, say  $D^{(k)}_{i_r}$, $1\leq i\leq n$, $1\leq r \leq m^{k-3}$ such that, for every vertex $x\in D^{(k)}_{i_r}$, we have $d(u,x)=k-l$. In the same way, for the vertex $v\in D^{(l)}_{i_j}$ in  the layer $V_l$ there is a component of the layer $U_k$, say  $D^{(k)}_{i_s}$, $1\leq i\leq n$, $1\leq s \leq m^{k-3}$, $r\neq s$ such that, for every vertex $y\in D^{(k)}_{i_s}$, we have $d(v,y)=k-l$. Thus, $d(u, x) - d(u, y) \neq d(v, x) - d(v, y)$  because $d(u,y)=k-l+1$ and $d(v,x)=k-l+1$.\newline

Case 4. Suppose that both vertices  $u$ and $v$ lie in the layer $U_l$, $l\geq 3$ such that these vertices   lie in the  two distinct  components of the layer $U_l$. We can assume without loss of generality that $u\in D^{(l)}_{p_{j_1}}$ and $v\in D^{(l)}_{q_{j_2}}$,
$1\leq p, q \leq n$, and $1\leq {j_1}, {j_2}\leq m^{l-3}$.
Moreover, we know that the line graph of the graph $G$ has the property that, for each vertex $u\in D^{(l)}_{p_{j_1}}$ in  the layer $U_l$ there is a component of the layer $U_k$, say  $D^{(k)}_{p_r}$, $1\leq r \leq m^{k-3}$ such that for every vertex $x\in D^{(k)}_{p_r}$, we have $d(u,x)=k-l$. In the same way, for the vertex $v\in D^{(l)}_{q_{j_2}}$ in  the layer $V_l$, there is a component of the layer $U_k$, say  $D^{(k)}_{q_s}$, $1\leq s \leq m^{k-3}$,  such that, for every vertex $y\in D^{(k)}_{q_s}$, we have $d(v,y)=k-l$. In the following,
let two components $ D^{(l)}_{p_{j_1}}$, and $D^{(l)}_{q_{j_2}}$ are fundamental; indeed,  $p=q$. Hence $d(u, v)=2l-5$, $d(u, y)=d(v, x)=k+l-5$.
Thus, $d(u, x) - d(u, y) \neq d(v, x) - d(v, y)$.
Now, let two components $ D^{(l)}_{p_{j_1}}$  and $D^{(l)}_{q_{j_2}}$ are not fundamental; indeed,  $p\neq q$. Hence $d(u, v)=2l-4+d_{U_2}(u_p, u_q)$,
$d(u, y)=d(v, x)=k+l-4+d_{U_2}(u_p, u_q)$.
Thus, $d(u, x) - d(u, y) \neq d(v, x) - d(v, y)$.\newline

Case 5.
Suppose that  vertices  $u$ and $v$ lie in distinct layers $U_a, U_b$, respectively. Note that if $a=1$ and $b=2$, $a=1$ and $b>2$ or $a=2$ and $b>2$ there is nothing  to do. Now let, $3\leq a< b$. Hence, there is a  component of the layer $U_a$, say
$D^{(a)}_{i_j}$, $1\leq i\leq n$, $1\leq j \leq m^{a-3}$ such that $u\in D^{(a)}_{i_j}$.
Also, there is a  component of the layer $U_b$, say  $D^{(b)}_{p_q}$, $1\leq p\leq n$, $1\leq q \leq m^{b-3}$ such that $v\in D^{(b)}_{p_q}$.
In particular, there is a component of the layer $U_k$, say  $D^{(k)}_{i_r}$, $1\leq i\leq n$, $1\leq r \leq m^{k-3}$ such that, for any vertex
$x\in D^{(k)}_{i_r}$, we have $d(u, x)=k-a$. Now, let $i=p$;  if we consider
$y\in D^{(k)}_{z_s}$, $z\neq i$, $1\leq z\leq n$, and $1\leq s \leq m^{k-3}$, then we have $d(u, x) - d(u, y) \neq d(v, x) - d(v, y)$
because $d(u, y)=k+a-4+ d_{U_2}(u_i, u_z)$, $d(v, y)=k+b-4+ d_{U_2}(u_i, u_z)$, and $d(u, x) \neq d(v, x)$.
Note that if $i\neq p$, then  there is a component of the layer $U_k$, say  $D^{(k)}_{p_s}$, $1\leq p\leq n$, $1\leq s \leq m^{k-3}$ such that, for any vertex
$y\in D^{(k)}_{p_j}$, we have $d(v, y)=k-b$, and then, we have $d(u, x) - d(u, y) \neq d(v, x) - d(v, y)$.
\newline
Thus, from   the above-mentioned  cases we  conclude that the  cardinality of minimum doubly
resolving set in the line graph of the graph $G$   is $nm^{k-2}-nm^{k-3}$.\newline
\end{proof}
\begin{theorem}\label{m.2}
Let $G=LSG(n, m, k)$ be the layer Sun  graph which is defined already. Suppose that $n, m, k$ are integers such that $n, k\geq 3$ and $m\geq 2$. Then, the strong metric dimension  in  the line graph of the graph $G$ is $nm^{k-2}-1$.
\end{theorem}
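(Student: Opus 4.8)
The plan is to establish the value $nm^{k-2}-1$ exactly as in Theorem~\ref{f.3}, by proving a lower bound (no small strong resolving set exists) and a matching upper bound (an explicit strong resolving set of that size). Throughout I use that $L(G)\cong H$ with the layered description given before the theorem, so that $|U_k|=nm^{k-2}$, that each component $D^{(k)}_{ij}\cong K_m$, and that $d(i,x)=l-1$ for $i\in U_1$ and $x\in D^{(l)}_{ij}\in U_l$, together with the analogous depth identities for the other layers.

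For the lower bound the decisive structural observation is that the line graph operation has turned each empty component $B^{(k)}_{ij}\cong\overline{K_m}$ of $LSG(n,m,k)$ into a clique $D^{(k)}_{ij}\cong K_m$, so that a vertex $x\in D^{(k)}_{ij}$ has closed neighbourhood exactly $D^{(k)}_{ij}$ together with the unique vertex of $U_{k-1}$ adjacent to $D^{(k)}_{ij}$. Hence any two vertices of a common component $D^{(k)}_{ij}$ are true twins, so $d(u,w)=d(v,w)$ for every $w\notin\{u,v\}$ and no such $w$ can place $u$ on a shortest $v$--$w$ path or $v$ on a shortest $u$--$w$ path. If instead $u\in D^{(k)}_{ij}$ and $v\in D^{(k)}_{rs}$ lie in distinct components, then the only neighbour of $u$ (resp.\ $v$) outside its own component is the associated vertex of $U_{k-1}$, so every shortest $u$--$v$ path runs through both of those $U_{k-1}$-vertices; a direct check gives $d(u,w)\leq d(u,v)$ for every $w\in N(v)$ and $d(w,v)\leq d(u,v)$ for every $w\in N(u)$, so $u$ and $v$ are mutually maximally distant and again no third vertex strongly resolves them. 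Finally, any vertex of $U_l$ with $l<k$ has a descendant in $U_k$, hence always possesses a neighbour strictly farther from any prescribed vertex (a child if the prescribed vertex is not below it, its parent or a cycle-neighbour otherwise), so it is never maximally distant from anything and contributes no new un-resolvable pair. Consequently a strong resolving set of $L(G)$ must contain all but at most one vertex of $U_k$, giving $sdim(L(G))\geq nm^{k-2}-1$.

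For the upper bound I would take
$$W=\{D^{(k)}_{1_1}-(u_{1_1}, 1)^k, D^{(k)}_{1_2}, ..., D^{(k)}_{1_{m^{k-3}}},   ..., D^{(k)}_{n_1}, D^{(k)}_{n_2}, ..., D^{(k)}_{n_{m^{k-3}}}\},$$
so that $|W|=nm^{k-2}-1$ and $V(L(G))-W=U_1\cup U_2\cup ...\cup U_{k-1}\cup\{(u_{1_1},1)^k\}$, and show that every pair of distinct vertices $u,v$ of $V(L(G))-W$ is strongly resolved by some $w\in W$. This runs exactly along Cases~2.1--2.7 of Theorem~\ref{f.3}: one splits according to whether $u,v$ both lie in $U_1$, both in $U_2$, both in one component of some $U_l$ ($l\geq3$), in two distinct components of the same $U_l$, in two distinct layers, or one of them equals $(u_{1_1},1)^k$; in each case one selects a vertex $w$ in a component of $U_k$ lying below one of $u,v$, and verifies $d(u,w)<d(v,w)$ together with $u$ belonging to a shortest $w$--$v$ path (or the symmetric statement). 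The only changes from Theorem~\ref{f.3} are bookkeeping constants---the intra-component distance in $U_l$ is $1$ rather than $2$, and the distances that previously traversed $V_1=C_n$ now traverse its line graph---but the inequalities needed are identical in form, so this step is routine given the earlier theorem.

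The main obstacle is the lower bound, specifically replacing the "every last-layer vertex is a leaf" argument that worked for $LSG(n,m,k)$: here the vertices of $U_k$ have degree $m$, so one must instead argue via true twins inside each $D^{(k)}_{ij}$ and mutual maximal distance between components, and---crucially---verify that no pair involving a vertex outside $U_k$ is un-resolvable, lest the strong resolving set be forced larger. Once this is granted, the set $U_k$ is the unique obstruction (it induces a clique in the strong resolving graph), its optimal cover has size $nm^{k-2}-1$, and the explicit $W$ above realizes it, so $sdim(L(G))=nm^{k-2}-1$.
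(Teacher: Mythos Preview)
Your proposal is correct and follows essentially the same route as the paper, which simply declares that the set $W=\{D^{(k)}_{1_1}-(u_{1_1},1)^k, D^{(k)}_{1_2},\ldots,D^{(k)}_{n_{m^{k-3}}}\}$ works ``in a similar way'' to Theorem~\ref{f.3}. In fact you are more careful than the paper on the lower bound: you correctly observe that the degree-one argument from Theorem~\ref{f.3} no longer applies in $L(G)$ and replace it with the true-twin / mutually-maximally-distant argument for vertices of $U_k$, which the paper does not spell out; your remark that vertices outside $U_k$ are never maximally distant is not needed (the clique on $U_k$ already forces $|W|\geq nm^{k-2}-1$, and the explicit $W$ gives the matching upper bound), so you need not worry about fully justifying that aside.
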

\begin{proof}
In a similar way which is done in the proof of Theorem \ref{f.3},  we can show that  the subset
$$W=\{D^{(k)}_{1_1}-(v_{1_1}, 1)^k, D^{(k)}_{1_2}, ..., D^{(k)}_{1_{m^{k-3}}},   ..., D^{(k)}_{n_1}, D^{(k)}_{n_2}, ..., D^{(k)}_{n_{m^{k-3}}}\},$$
of vertices in the line graph of the graph $G$ is  a minimal resolving set of $L(G)$.
\end{proof}
\section{Conclusion}
In this paper, we have constructed a layer Sun  graph $LSG(n, m, k)$, discussed of this graph, and computed  the minimum cardinality of  doubly resolving set and strong resolving set  of layer Sun  graph $LSG(n, m, k)$  and the line graph of the layer Sun  graph $LSG(n, m, k)$.
We deduce that, by this way, we can construct a layer jellyfish graph $JFG(n, m, k)$, of order $n+ \Sigma_{r=1}^{k-1}nm^{r}$,
where the jellyfish graph $JFG(n, m)$, which is defined in [5], and  by a similar way, we can obtain and compute the minimum cardinality of   doubly resolving set  and strong resolving set of layer jellyfish graph $JFG(n, m, k)$ and the line graph of the layer jellyfish graph $JFG(n, m, k)$.
\newline

\bigskip
{\footnotesize
\noindent \textbf{Data Availability}\\
No data were used to support this study.\\[2mm]
\noindent \textbf{Conflicts of Interest}\\
The authors declare that there are no conflicts of interest
regarding the publication of this paper.\\[2mm]
\noindent \textbf{Acknowledgements}\\
The work was partially supported by the Project of Anhui
Jianzhu University under grant nos. 2016QD116 and
2017dc03. \\[2mm]
\noindent \textbf{Authors' informations}\\
\noindent Jia-Bao Liu${}^{a,b}$
(\url{liujiabaoad@163.com;liujiabao@ahjzu.edu.cn})\\
Ali Zafari${}^{c}$(\textsc{Corresponding Author})
(\url{zafari.math.pu@gmail.com}; \url{zafari.math@pnu.ac.ir})\\

\noindent
${}^{a}$ Department of Mathematics, Huainan Normal University, Huainan 232038,  P.R. China.\\
${}^{b}$ School of Mathematics and Physics, Anhui Jianzhu University, Hefei 230601, P.R. China.\\
${}^{c}$ Department of Mathematics, Faculty of Science,
Payame Noor University, P.O. Box 19395-4697, Tehran, Iran.
{\footnotesize


\bigskip
\end{document}